\numberwithin{equation}{section}
\newcommand{\R}{\mathbb{R}}
\newtheorem{theorem}{Theorem}[section]
\newtheorem{cor}[theorem]{Corollary}
\newtheorem{lemma}[theorem]{Lemma}
\theoremstyle{remark}
\newtheorem{remark}[theorem]{Remark}
\newcommand{\lb}{\left(}
\newcommand{\rb}{\right)}
\newcommand{\lv}{\lvert}
\newcommand{\rv}{\rvert}
\renewcommand{\a}{\alpha}
\renewcommand{\b}{\beta}
\renewcommand{\l}{\lambda}
\renewcommand{\d}{\delta}
\newcommand{\s}{\sigma}
\newcommand{\floor}[1]{\lfloor #1 \rfloor}
\newcommand{\C}{\mathbb C}
\begin{document}
\title[Universality in Random Moment Problems]{Universality in Random Moment Problems}
\author[]{Holger Dette$^*$}
\address{$^*$Department of Mathematics, Ruhr University Bochum, 44780 Bochum,
Germany}
\email{holger.dette@ruhr-uni-bochum.de}

\author[]{Dominik Tomecki$^\dag$}
\address{$^\dag$Department of Mathematics, Ruhr University Bochum, 44780 Bochum,
Germany}
\email{dominik.tomecki@ruhr-uni-bochum.de}

\author{Martin Venker$^\ddag$}
   \address{$^\ddag$Faculty of Mathematics, Bielefeld University, 33501 Bielefeld, Germany}
   \email{mvenker@math.uni-bielefeld.de}

\keywords{Random moment sequences, universality, CLT, large deviations principles, Stieltjes transform, free probability.}
\subjclass[2010]{60F05, 30E05, 60B20}

\begin{abstract}
Let $\mathcal{M}_n(E)$ denote the set of vectors of the first $n$ moments of probability measures on $E\subset\R$ with existing moments. The investigation of such moment spaces in high dimension has found considerable interest in the recent literature.
For instance, it has been shown that a uniformly distributed moment sequence in $\mathcal M_n([0,1])$ converges in the large $n$ limit to the moment sequence of the arcsine distribution. In this article we provide  a  unifying viewpoint by identifying classes of more general distributions on $\mathcal{M}_n(E)$ for $E=[a,b],\,E=\R_+$ and $E=\R$, respectively, and discuss universality problems within these classes.
In particular, we demonstrate that the moment sequence of the arcsine distribution is not universal for $E$ being a compact interval. On the other hand, on the
moment spaces $\mathcal{M}_n(\R_+)$ and $\mathcal{M}_n(\R)$ the random moment
sequences governed by our distributions exhibit for $n\to\infty$ a universal behaviour: The first $k$ moments 
of such a random vector converge almost surely to  the first $k$ moments 
of the  Marchenko-Pastur distribution (half line) and Wigner's semi-circle distribution (real line). Moreover, the fluctuations around the limit sequences are Gaussian. 
We also obtain moderate and large deviations principles and discuss relations of our findings with free probability.
\end{abstract}

\maketitle

\section{Introduction } 
Let $\mathcal{P}(E)$ denote the set of probability measures on an (possibly infinite) 
 interval $E\subset \R$ with finite moments of all orders. For a measure  $\mu \in \mathcal{P}(E)$ denote by $m_j(\mu)=\int_E x^jd\mu(j)$ its $j$-th moment and define
\begin{align*}
\mathcal{M}_n(E): = \big \{(m_1(\mu),\dots,m_n(\mu)):\mu\in \mathcal {P}(E) \big \}
\end{align*}
as  the set of moment sequences up to order $n$, generated by $\mathcal {P}(E)$. The set $\mathcal{M}_n(E)$ is convex and has  
been the  subject of many studies beginning with
  \cite{karsha1953},  \cite{karlin1966} and  \cite{krenud1977}. In these classical works, geometric aspects of moment spaces were 
studied. While the even more classical moment problems deal with all \textit{possible} moment sequences, a probabilistic 
investigation rather asks how a \textit{typical} moment sequence looks like. This was initiated in  \cite{chakemstu1993}, where a  
uniform distribution on  $\mathcal{M}_n([0,1])$ was considered. There it was shown
that the first $k$ moments of such a random  vector $ (m_1^{(n)},\dots,m_n^{(n)}) $   in $\mathcal{M}_n([0,1])$ obey a law of large numbers,
  when $n$ tends to infinity (but $k$ is fixed), that is
  \begin{align}\label{lln}
 (m_1^{(n)},\dots,m_k^{(n)})  \xrightarrow{d}  (m_1^*,\dots,m_k^*)  ,\quad n\to\infty,
\end{align}
$\xrightarrow{d}$ denoting convergence in distribution.
Here $m_j^{(n)}$ is the $j$-th component of the random moment vector $(m_1^{(n)},\dots,m_n^{(n)})  $ and $m_j^*$ is the $j$-th moment of the arcsine distribution (on the interval $[0,1]$).
They also derived the central limit theorem
\begin{align}\label{CKS}
\sqrt{n}\big ( (m_1^{(n)},\dots,m_k^{(n)})  -(m_1^*,\dots,m_k^*)  \big  )\xrightarrow{d} \mathcal{N}(0,\Sigma_k ),\quad n\to\infty
\end{align}
 with the covariance matrix $\Sigma_k =(m_{i+j}^*-m_i^*m_j^*)^k_{i,j=1}$.
\cite{gamloz2004}  investigated corresponding large deviations principles, while  \cite{lozada2005}
 studied similar problems for moment spaces corresponding to more general functions defined on a bounded set. 
 
 More recently, \cite{detnag2012}   defined special probability distributions on the non-compact moment spaces  $\mathcal{M}_n ([0,\infty))$ and $\mathcal{M}_{2n-1}(\mathbb{R})$.
 They could establish results analogous to \eqref{CKS} with the moments of the arcsine distribution replaced by those of the Marchenko-Pastur distribution (on $[0,\infty)$) and of the semicircle distribution (on $\R$), respectively.

 In this article, we are going to investigate this surprising occurrence of the three distributions arcsine, Marchenko-Pastur and 
semicircle distribution in more detail. We are particularly interested in a possible universality of these distributions, as in 
random matrix theory the latter two  appear naturally for large classes of random matrices with independent entries (see e.g.~\cite{BaiSilverstein} and references therein). The arcsine measure also appears as a universal distribution of zeros of orthogonal polynomials with respect to 
weight functions on compact intervals (see \cite{StahlTotik}).
Especially for unbounded moment spaces a clarification of universality seems desirable, as there is no uniform measure and thus 
the consideration of a particular probability measure needs justification. In other words, we are asking for how typical the moment sequences of arcsine, semicircle and Marchenko-Pastur distribution are.

 The paper will be organized as follows. In Section \ref{sec2} we review some basic facts about moment spaces and introduce 
general classes of distributions on the moment spaces under consideration. They keep two key features of 
the uniform distribution on $\mathcal{M}_n([a,b])$ and can be used to interpolate between distributions on compact and non-compact 
moment spaces. For these distributions we derive laws of large numbers of the type \eqref{lln}. In particular, we show that for 
moment spaces $\mathcal{M}_n([a,b])$ corresponding to compact intervals there is no universality of the arcsine 
distribution. Instead, the arising measures are known as free binomial distributions, i.e.~the analogues of the binomial distribution 
in free probability theory. On the other hand, for the moment spaces $\mathcal{M}_n([0,\infty))$ and 
$\mathcal{M}_{n}(\mathbb{R})$ the first $k$ moments of a random vector always converge to the first $k$ moments of 
Marchenko-Pastur 
and semicircle distributions, respectively. The occurrence of both distributions will be explained in terms of free Poissonian 
and free central limit theorems for the free binomial distribution. 
 In Section  \ref{sec3} we consider  central limit theorems of the form \eqref{CKS} and investigate moderate and large deviations principles for random moment sequences. 
 All proofs are postponed to Section \ref{sec4}.
 Our results provide an extensive description of the distributional properties of random moment sequences and a unifying view on several findings in the recent literature.  

 \section{Laws of Large Numbers} \label{sec2}
To motivate the class of distributions considered in this paper, we remark first that
a real valued sequence $(m_i)_{i \in \mathbb{N}_0}$ is a sequence of moments corresponding to a Borel measure on the real line if and only if all Hankel matrices  $(m_{i+j})^n_{i,j=0}$ are positive semi-definite (see \cite{hamburger1920}). Similar characterizations exist for measures supported on the half line  $[0,\infty)$ and  compact intervals, and the corresponding sequences are called Stieltjes and Hausdorff moment sequences  (see \cite{dettstud1997}). Due
   to   restrictions and relations of this type, the components of a random  moment vector in $\mathcal{M}_n(E)$ 
   are generically not independent coordinates.  
   Moreover, for a compact interval  $E$ the moment space $\mathcal{M}_n(E)$ is a rather small set. For instance, it is known that 
the volume of $\mathcal{M}_n([0,1])$ is of order $\mathcal O(2^{-n^2})$ (see  \cite{karsha1953}), as for a given moment sequence 
$(m_1,\dots,m_{n-1})\in \mathcal{M}_{n-1}([0,1])$, the possible range of the $n$-th moment   $m_n$ is very small.

   For these reasons, we will consider different sets of coordinates that  scale with the possible range of values. Although there are infinitely many choices of such coordinates, some are particularly natural and have found considerable attention in the literature. To be precise, assume that $(m_1,\dots,m_{j-1}) \in \mathcal{M}_{j-1} ([a, b])$ is a given vector of moments up to the order $j-1$.
    Then, because of convexity of $ \mathcal{M}_{j} ([a, b])$, the set of possible values $m_{j}$ 
    $$
    \big \{ m_j(\mu) \big |  ~\mu \in \mathcal{P}([a, b]) ;  ~m_i(\mu)  =  m_i ~\text{ for all } i=1,\ldots ,j-1 \big\} 
    $$
    is a compact interval, say $[m_j^-,m_j^+]$. Following \cite{dettstud1997}, we define for $m_j^+\not=m_j^-$ and a given $j$-th moment $m_j$ the $j$-th canonical moment $p_j$ via
\begin{align*}
p_j:=\frac{m_j-m_j^-}{m_j^+-m_j^-}.
\end{align*}
The canonical moments are left undefined if $m^-_j = m^+_j$ (in this case the vector $(m_1,\ldots, m_{j-1})$ is a boundary point of 
the set $\mathcal{M}_{j-1}([a, b])$ - see \cite{karlin1966}).
Clearly, $p_j\in [0,1]$, and  $p_j$ gives the relative position of $m_j$ in the available section of the set $\mathcal{M}_j([a, b])$. It is also worthwhile to mention that canonical moments are invariant under linear transformations of the measure (see \cite{dettstud1997}, p. 13).
The correspondence map  
\begin{equation} \label{phimap}
\varphi_n^{[a,b]}:  {\vec p_n} =(p_1,\dots ,p_n) \mapsto { \vec  m_n} =(m_1,\dots ,m_n)
\end{equation} 
 between the  canonical and ordinary moments is one-to-one from $(0,1)^n$ onto ${ \rm Int }(\mathcal{M}_n ([a, b]))$ 
($\textup{Int}$ denoting the interior)
 and many classical quantities of the measure, especially of its associated orthogonal polynomials and the continued fraction expansion of its Stieltjes transform, have expressions in terms of the canonical moments (see \cite{dettstud1997} for more details).
Canonical moments were introduced in a series of papers by  \cite{skibinsky1967,skibinsky1968,skibinsky1969} and 
are  closely related to the Verblunsky coefficients, which were investigated much earlier by
 \cite{verblunsky1935,verblunsky1936} for measures on the unit circle.

In case of the uniform distribution on $\mathcal M_n([0,1])$, as studied in \cite{chakemstu1993}, the canonical moments 
have two important properties. After a change of variables by \eqref{phimap}, the uniform 
 distribution on $\mathcal{M}_n([0,1])$ has a density w.r.t.~the Lebesgue measure on $(0,1)^n$ proportional to
 \begin{align}
 \prod_{j=1}^n (p_j(1-p_j))^{ n-j } = 
 \exp\Big[\sum_{j=1}^n (n-j)\log(p_j(1-p_j)) \Big].\label{def_uniform_canonical}
 \end{align}
 Thus, the canonical moments are independent and for $n\gg j$ nearly 
identically distributed. To investigate a possible universality of the arcsine distribution, we will now define a class of 
distributions respecting these two properties. However, 
we will generalize the situation by allowing for different distributions of even and odd canonical moments. This takes into account the different roles that 
even and odd moments 
play. While even moments are always positive and give some rough information about the size of the
support of the measure, odd moments give information about location of the support and the symmetry of the measure. In canonical moments, symmetry around the center of $[a,b]$ 
can 
be characterized easily as the property that all odd canonical moments are $1/2$ (see \cite{skibinsky1969}).

Let $V_1,V_2:{[0,1]}\rightarrow{\R}$ be continuous functions. Define the probability measure 
$\mathbb{P}_{n,[a,b],V_{1,2}}$ on $\mathcal{M}_n ([a,b])$ by $\mathbb{P}_{n,[a,b],V_{1,2}}\lb\partial\mathcal{M}_n ([a,b])\rb=0$ and on ${\rm Int} ( \mathcal{M}_n ([a,b]))$  via the density 
\begin{align}
P_{n,[a,b],V_{1,2}}(m_1,\dots,m_n):=\frac{1}{Z_{n,[a,b],V_{1,2}}}\exp\Big[-n \sum_{j=1}^{\floor{\frac{n+1}2}} V_1(p_{2j-1})-n 
\sum_{j=1}^{\floor{\frac n2}} V_2(p_{2j})\Big]\label{def_compact}
\end{align}
w.r.t.~the $n$-dimensional Lebesgue measure, where $p_j=p_j(m_1,\dots,m_j)$ is the $j$-th canonical moment 
of the sequence $(m_1,\dots,m_n) \in {\rm Int } ( \mathcal{M}_n ([a,b]) )$
defined by \eqref{phimap}  $(j=1,\ldots,n)$ and $Z_{n,[a,b],V_{1,2}}$ is the normalization constant. By $\floor{x}$ we denote the 
largest natural number smaller or equal to $x$.
Note that the case $V_1(x)=V_2(x)\equiv 0$ and  $[a,b]=[0,1]$ has been considered in \cite{chakemstu1993}. The factors $n$ in the 
exponent in \eqref{def_compact} are asymptotically equivalent to the factor $n-j$ in \eqref{def_uniform_canonical}. It 
follows from \eqref{def_uniform_canonical} that under $\mathbb{P}_{n,[a,b],V_{1,2}}$ the odd, respectively even, canonical moments 
are nearly i.i.d..

Let us 
now  formulate our first result for random moment sequences on measures supported on the interval $[a,b]$. Here and later on, we 
will tacitly assume that the random variables $(m_j^{(n)})_{j,n\geq1}$ are defined on the same probability space.

\begin{theorem}\label{thrm_compact_1}\strut
\begin{enumerate}
\item Let $a<b$ and $V_1,V_2\in C^2((0,1))$ be continuous at 0 and 1. Assume that the functions
\begin{align*}
W_1(p) := V_1(p)-\log(p(1-p))\qquad \text{ and }\qquad W_2(p) := V_2(p)-\log(p(1-p))
\end{align*}
each have a unique minimizer $p_1^*\in(0,1)$ and $p_2^*\in(0,1)$, respectively. Let $m^{(n)}=(m_1^{(n)},\dots,m_n^{(n)})$ be drawn 
from $\mathbb{P}_{n,[a,b],V_{1,2}}$ and abbreviate $q^*_{i}:=1-p^*_{i},i=1,2$. Then we have for each $k\geq1$ as $n\to\infty$
\begin{align*}
(m_1^{(n)},\dots,m_k^{(n)})\to(m^*_1,\dots,m^*_k)
\end{align*}
almost surely and in $L^1$, where $m^*_1,\dots,m^*_k$ are the first $k$ moments of a probability measure 
$\mu_{p^*_{1},p^*_2}=\mu_{p^*_{1},p^*_2}^{ac}+\mu_{p^*_{1},p^*_2}^{d}.$ Setting
\begin{align*}
l_\pm:=a+(b-a)\lb\sqrt{p_1^*q_2^*}\pm\sqrt{p_2^*q_1^*}\rb^2,
\end{align*}
 the measures $\mu_{p^*_{1},p^*_2}^{ac}$ and $\mu_{p^*_{1},p^*_2}^{d}$ are given by
\begin{align*} 
&\mu_{p^*_{1},p^*_2}^{ac}(dx)=\frac{\sqrt{(x-l_-)(l_+-x)}}{2\pi p_2^*(x-a)(b-x)}1_{[l_-,l_+]}(x)dx,\\
&\mu_{p^*_{1},p^*_2}^{d}=\lb 1-\frac{p_1^*}{p_2^*}\rb_+\d_a+\lb\frac{p_1^*+p_2^*-1}{p_2^*}\rb_+\d_b.
\end{align*}
Here $(y)_+$ denotes the positive part of $y \in \mathbb{R}$  and $\d_y$ is the Dirac measure at the point $y$.
\item
If $p_1^*,p_2^*$ are such that $\mu_{p^*_{1},p^*_2}$ does not have atoms, then $\mu_{p^*_{1},p^*_2}$ is the equilibrium measure on 
the interval $[a,b]$ to the external field
\begin{align*}
Q(t):={}&-\lb\frac{p_1^*}{p_2^*}-1\rb\log(t-a)-\lb\frac{1-p_1^*-p_2^*}{p_2^*}\rb\log(b-t), 
\end{align*}
i.e.~$\mu_{p^*_{1},p^*_2}$ is the unique Borel probability measure on the interval $[a,b]$ minimizing the functional
\begin{align}
\mu\mapsto\int_a^bQ(t)d\mu(t)-\int_a^b\int_a^b\log\lvert t-s\rvert d\mu(t)d\mu(s).\label{eq_measure}
\end{align}
\end{enumerate}
\end{theorem}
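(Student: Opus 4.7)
The plan is to pass to canonical-moment coordinates via \eqref{phimap} and exploit the factorization built into \eqref{def_compact}. The Jacobian of $\varphi_n^{[a,b]}$ is known to be proportional to $\prod_{j=1}^{n}(p_j(1-p_j))^{n-j}$ (see \cite{dettstud1997}), so the pushforward of $\mathbb{P}_{n,[a,b],V_{1,2}}$ has density on $(0,1)^n$ proportional to
\begin{align*}
\prod_{j=1}^{n}(p_j(1-p_j))^{n-j}\;\exp\!\Big[-n\sum_{j=1}^{\floor{(n+1)/2}} V_1(p_{2j-1})-n\sum_{j=1}^{\floor{n/2}} V_2(p_{2j})\Big].
\end{align*}
Crucially this factorizes over $j$, so under $\mathbb{P}_{n,[a,b],V_{1,2}}$ the canonical moments $p_1,\ldots,p_n$ are \emph{independent}, and the marginal density of $p_j$ is proportional to $\exp(-nW_i(p_j))\,(p_j(1-p_j))^{-j}$, where $i=1$ for odd $j$ and $i=2$ for even $j$.

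For each fixed $j$, standard Laplace asymptotics applied to this marginal yield exponential concentration of $p_j$ around the unique interior minimizer $p^*_i\in(0,1)$ of $W_i$; a Borel--Cantelli argument using this exponential rate upgrades the convergence to almost sure (and, by boundedness of $p_j\in[0,1]$, $L^1$) convergence of $p_j$ to $p^*_1$ for odd $j$ and to $p^*_2$ for even $j$, as $n\to\infty$. Since the first $k$ ordinary moments $m_1,\ldots,m_k$ are polynomial functions of $p_1,\ldots,p_k$ alone (Wall's recursion, see \cite{dettstud1997}), the continuous mapping theorem yields the stated convergence of $(m_1^{(n)},\ldots,m_k^{(n)})$ to the first $k$ moments of the unique probability measure $\mu_{p^*_1,p^*_2}$ on $[a,b]$ whose canonical-moment sequence is the periodic $(p^*_1,p^*_2,p^*_1,p^*_2,\ldots)$.

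To extract the explicit form of $\mu_{p^*_1,p^*_2}$, we invoke Wall's continued-fraction representation of its Stieltjes transform $G(z)=\int(z-t)^{-1}d\mu_{p^*_1,p^*_2}(t)$ in terms of canonical moments. When the canonical-moment sequence is periodic of period two, the tail of the continued fraction is a fixed point of an explicit rational self-map, so $G$ satisfies a quadratic functional equation whose discriminant is proportional to $(z-l_-)(z-l_+)$ with $l_\pm$ as defined. The Stieltjes--Plemelj inversion formula applied to the non-trivial branch produces the absolutely continuous density $\mu_{p^*_1,p^*_2}^{ac}$ on $[l_-,l_+]$, while the residues of $G$ at $z=a$ and $z=b$ (present precisely when $p^*_1<p^*_2$, respectively $p^*_1+p^*_2>1$) yield the atomic masses. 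The hard part will be this identification: one must track the correct branch of the square root and perform a case analysis of residues across the parameter regimes that permit atoms.

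For part (2), assume that $\mu_{p^*_1,p^*_2}$ has no atoms, i.e.~$p^*_1\ge p^*_2$ and $p^*_1+p^*_2\le 1$; both coefficients in $Q$ are then non-negative, so $Q$ is convex on $(a,b)$ and the functional \eqref{eq_measure} is strictly convex in $\mu$. A unique minimizer exists and is characterized by the Euler--Lagrange conditions $2\int\log|x-t|\,d\mu(t)-Q(x)=\ell$ on $\mathrm{supp}(\mu)$ with the reverse inequality on $[a,b]\setminus\mathrm{supp}(\mu)$. Differentiating in $x$ reduces the on-support identity to $2\,\mathrm{P.V.}\!\int(x-t)^{-1}\,d\mu_{p^*_1,p^*_2}(t)=Q'(x)$ on $(l_-,l_+)$, which is precisely the real part of the boundary value $G(x\pm i0)$ computed above. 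The off-support inequality then follows from monotonicity and the sign of $Q'$ outside $[l_-,l_+]$, identifying $\mu_{p^*_1,p^*_2}$ as the equilibrium measure.
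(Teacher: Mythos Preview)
Your plan is correct and follows essentially the same route as the paper. The paper likewise first establishes exponential concentration of each canonical moment at $p_i^*$ (via the large deviations principle of Theorem~\ref{thrm_LDP}), invokes Borel--Cantelli and boundedness for a.s.\ and $L^1$ convergence, and then identifies $\mu_{p_1^*,p_2^*}$ through the continued-fraction expansion of its Stieltjes transform; for part~(2) it verifies the Euler--Lagrange conditions exactly as you outline. The only organizational difference is that the paper converts the periodic canonical-moment sequence into recurrence coefficients $\alpha_j,\beta_j$ (constant for $j\ge 2$) and recognizes the periodic tail of the continued fraction as the already-computed semicircle Stieltjes transform $\Phi_{SC,\alpha,\beta}$, rather than solving the fixed-point quadratic from scratch; this is a matter of bookkeeping, not a different idea.
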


\vspace{4em}

\begin{remark} ~~
\begin{enumerate}
	\item
If $p_1^*=p_2^*=1/2$, the measure $\mu_{p^*_1,p_2^*}$ in Theorem~\ref{thrm_compact_1}  is the arcsine distribution on the interval   $[a,b]$. Note that this does not imply $V_1=V_2\equiv 0$. However, we see that for $p^*_1\not=1/2$ or $p_2^*\not=1/2$, the limiting measure (the measure having the limiting moments) is not the arcsine measure or an affine rescaling of it. We conclude that the moments of the arcsine measure are not universal within the class of random moment sequences in $\mathcal{M}_n([a,b])$ with nearly i.i.d.~canonical moments. On the other hand, there is still some universality 
as the limiting measure only depends on $V_1,V_2$ via the parameters $p_1^*$ and $p_2^*$.
\item Since for probability measures supported on a fixed compact set convergence of moments is equivalent to convergence in 
distribution, the convergence result of Theorem \ref{thrm_compact_1} can be restated as follows: Let $\mu_n\in \mathcal P([a,b])$ be 
a random probability measure with first $n$ moments $(m_1^{(n)},\dots,m_n^{(n)})$ which are $\mathbb 
P_{n,[a,b],V_{1,2}}$-distributed. Then $\mu_n$ converges a.s.~(and in expectation) weakly to $\mu_{p_1^*,p_2^*}$ as $n\to\infty$. 
\end{enumerate}
\end{remark}
The measure $\mu_{p_1^*,p_2^*}$ is known in the literature under (at least) two different names. In the context of probability 
theory on graphs, it is called Kesten-McKay measure (see \cite{Kesten,McKay}). It has also been studied in the context of orthogonal 
polynomials (see \cite{CohenTrenholme,SaitohYoshida,CastroGrunbaum}). In free probability, it is called free binomial distribution 
(see \cite{nicspe2006}). It will turn out useful to explain this naming in more detail. 

Free probability is a variant of non-commutative probability theory initiated by Voiculescu (see \cite{nicspe2006} or 
Chapter 22 by Speicher in \cite{Handbook} for an introduction and references) that has found its applications in particular in random matrix 
theory. For our purposes it suffices to know that free probability theory uses a different notion of independence, called freeness, 
that manifests itself in a different convolution of probability measures. A constructive approach to this convolution uses random 
matrices: Let $H_{1,n},H_{2,n}$ be deterministic diagonal $n\times n$ matrices with diagonal entries $h_{1,n}(ii)$ and 
$h_{2,n}(ii)$, 
respectively. Assume that the empirical measures of the diagonal entries, i.e.~the eigenvalues, converge for $n\to\infty$ weakly to 
probability measures of bounded support $\mu_1$ and $\mu_2$, respectively, that is
\begin{align*}
\lim_{n\to\infty}\frac1n\sum_{i=1}^n\d_{h_{j,n}(ii)}=\mu_j, \quad j=1,2,\quad \text{ weakly.}
\end{align*}
Now let for each $n$ a Haar distributed random unitary $n\times n$ matrix $U_n$ be given on a common probability space. The Haar 
probability measure on the unitary group $\mathcal U_n$ is the unique Borel probability measure that is invariant under left (and 
right) multiplication with any group element. Letting $x_1,\dots,x_n$ denote the $n$ real random eigenvalues of the Hermitian 
random matrix $H_{1,n}+U_nH_{2,n}U_n^*$, the empirical measure of the $x_i$'s converges for $n\to\infty$ almost surely in 
distribution to a non-random limit. This limit is called the free (additive) convolution of $\mu_1\boxplus\mu_2$, in symbols
\begin{align*}
\mu_1\boxplus\mu_2:=\lim_{n\to\infty}\frac1n\sum_{i=1}^n\d_{x_i}\qquad \text{ a.s.~weakly}.
\end{align*}
In analogy to classical probability, the free binomial distribution with parameters $n\in\mathbb N$ and $p\in[0,1]$ is then the 
$n$-fold free convolution of the Bernoulli distribution $\mu=(1-p)\d_0+p\d_1$ with itself. It seems convenient to extend the name to 
convolutions of measures $\mu=(1-p)\d_c+p\d_d$ with itself, $c,d\in\R$. Moreover, even fractional convolution numbers are possible 
using an analytic approach to the free convolution via the so-called $R$-transform (see \cite[Chapter 22]{Handbook}). It seems 
difficult to give a direct interpretation of the occurence of the free binomial distribution in the context of random moments. For 
instance it is not hard to verify that for $\mu=\frac12\d_c+\frac12\d_d$ the free convolution $\mu\boxplus\mu$ is the arcsine 
measure with support $[c+d-\sqrt{c^2+d^2},c+d+\sqrt{c^2+d^2}]$, but in general the measure $\mu_{p_1^*,p_2^*}$ is not just a 
two-fold convolution of a Bernoulli measure with itself.

However, free probability indicates that universal limiting measures may be expected if random moment problems are considered for the moment spaces $\mathcal{M}_n (\mathbb{R}_+)$ with $\R_+:=[0,\infty)$ and $\mathcal{M}_n(\mathbb{R})$. Indeed, analogous to classical probability, there are free analogs of Poisson limit theorem and central limit theorem for the free binomial distribution \cite[Chapter 22]{Handbook}. Typically, they are considered for $\mu=(1-p_m)\d_0+p_m\d_1$ and show weak convergence of the rescaled $n$-th convolution power $\mu^{\boxplus m}$ to the free Poisson (Marchenko-Pastur distribution) or the free Gaussian law (semicircle distribution), as $m\to\infty$ and $p_m$ converges to a zero or non-zero number, respectively. 

The following corollary can be seen as a variant of these limit theorems. The proof is straightforward and will be omitted.

\begin{cor}
	Let for each $m\in\mathbb N$ $a_m<b_m$	and $p^*_{1,m},p_{2,m}^*\in(0,1)$ be given.
\begin{enumerate}
\item Assume that, as $m\to\infty$,
\begin{align*}
&a_m\to0,\ b_m\to\infty,\quad p^*_{1,m},p_{2,m}^*\to0\ \text{ such that }\\
 &p^*_{i,m}b_m\to z_i^*, i=1,2,
\end{align*}
for some constants $z_1^*,z_2^*>0$. Then the measure $\mu_{p^*_{1,m},p_{2,m}^*}$ defined in Theorem \ref{thrm_compact_1} 
on the interval $[a_m,b_m]$ converges in the large $m$ limit weakly to the measure $\mu_{MP,z_1^*,z_2^*}$, where with 
$l_\pm:=(\sqrt{z_1^*}\pm\sqrt{z_2^*})^2$
\begin{align}
&\mu_{MP,z_1^*,z_2^*}(dx)=\lb 1-\frac{z_1^*}{z_2^*}\rb_+\d_0+\frac{1}{2\pi z_2^*}\frac{\sqrt{(x-l_-)(l_+-x)}}{x}1_{[l_-,l_+]}(x)dx.\label{MP}
\end{align}
The density of the absolutely continuous part of $\mu_{p^*_{1,m},p_{2,m}^*}(x)$ converges pointwise to the density of the absolutely continuous part of $\mu_{MP,z_1^*,z_2^*}$ and uniformly within compact subsets of $(l_-,l_+)$.  Moreover, the moments of $\mu_{p^*_{1,m},p_{2,m}^*}$ converge to the moments of $\mu_{MP,z_1^*,z_2^*}$.
\item  Assume that, as $m\to\infty$,
\begin{align*}
&a_m\to -\infty,\ b_m\to\infty,\\ 
&p^*_{2,m}\lv a_m\rv b_m\to \b^*,\quad a_m+(b_m-a_m)p^*_{1,m}\to\a^*
\end{align*}
for constants $\a^*\in\R,\b^*>0$. Then the measure $\mu_{p^*_{1,m},p^*_{2,m}}$ defined in Theorem \ref{thrm_compact_1} 
on the interval $[a_m,b_m]$ 
converges weakly in the large $m$ limit to the measure $\mu_{SC,\a^*,\b^*}$, where with $l_\pm:=\a^*\pm2\sqrt{\b^*}$
 \begin{align}
\mu_{SC,\a^*,\b^*}(dx)=\frac{1}{2\pi\b^*}\sqrt{(x-l_-)(l_+-x)}1_{[l_-,l_+]}(x)dx.\label{SC}
 \end{align}
The density of the absolutely continuous part of $\mu_{p^*_{1,m},p_{2,m}^*}(x)$ converges pointwise to the density of  $\mu_{SC,\a^*,\b^*}$ and uniformly within compact subsets of $(l_-,l_+)$.  
 Moreover, the moments of $\mu_{p^*_{1,m},p^*_{2,m}}$ converge to the moments of $\mu_{SC,\a^*,\b^*}$.
\end{enumerate}	
\end{cor}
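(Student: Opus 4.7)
The proof is a direct asymptotic analysis of the decomposition $\mu_{p_1^*,p_2^*}=\mu_{p_1^*,p_2^*}^{ac}+\mu_{p_1^*,p_2^*}^d$ from Theorem~\ref{thrm_compact_1}, applied to the sequence $(a_m,b_m,p_{1,m}^*,p_{2,m}^*)_{m\ge 1}$. My plan proceeds in four routine steps, structured identically for the two regimes: (i) determine $\lim_{m\to\infty}l_\pm^{(m)}$; (ii) establish pointwise and locally uniform convergence of the density of $\mu_{p_{1,m}^*,p_{2,m}^*}^{ac}$ on the open interval $(l_-^\infty,l_+^\infty)$; (iii) track the atomic masses at the endpoints $a_m$ and $b_m$; (iv) assemble weak and moment convergence.

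For step~(i), using the identity $p_{1,m}^*q_{2,m}^*+p_{2,m}^*q_{1,m}^*=p_{1,m}^*+p_{2,m}^*-2p_{1,m}^*p_{2,m}^*$, I would expand
\[
l_\pm^{(m)}=a_m+(b_m-a_m)(p_{1,m}^*+p_{2,m}^*-2p_{1,m}^*p_{2,m}^*)\pm 2(b_m-a_m)\sqrt{p_{1,m}^*p_{2,m}^*q_{1,m}^*q_{2,m}^*}.
\]
In part~(1), $q_{i,m}^*\to 1$ and $(b_m-a_m)p_{i,m}^*=b_m p_{i,m}^*-a_m p_{i,m}^*\to z_i^*$, so every summand has an explicit limit yielding $l_\pm\to(\sqrt{z_1^*}\pm\sqrt{z_2^*})^2$. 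In part~(2) I would regroup $l_\pm^{(m)}$ as $[a_m+(b_m-a_m)p_{1,m}^*]+(b_m-a_m)p_{2,m}^*-2(b_m-a_m)p_{1,m}^*p_{2,m}^*\pm 2(b_m-a_m)\sqrt{p_{1,m}^*p_{2,m}^*q_{1,m}^*q_{2,m}^*}$: the bracket converges to $\a^*$ by hypothesis, the next two summands vanish (of orders $(b_m+|a_m|)/(|a_m|b_m)$ and $1/b_m$), and the cross term is handled via $(b_m-a_m)^2 p_{1,m}^*q_{1,m}^*\sim|a_m|b_m$ (since $(b_m-a_m)p_{1,m}^*\sim|a_m|$ and $(b_m-a_m)q_{1,m}^*\sim b_m$) combined with $p_{2,m}^*q_{2,m}^*\sim\b^*/(|a_m|b_m)$, yielding cross-term limit $\pm 2\sqrt{\b^*}$ and thus $l_\pm\to\a^*\pm 2\sqrt{\b^*}$. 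For step~(ii), fix $x\in(l_-^\infty,l_+^\infty)$; the numerator $\sqrt{(x-l_-^{(m)})(l_+^{(m)}-x)}$ converges by step~(i), while the denominator $p_{2,m}^*(x-a_m)(b_m-x)$ is handled by factoring: in part~(1), $(x-a_m)\to x$, $p_{2,m}^*b_m\to z_2^*$, and $x/b_m\to 0$ give limit $z_2^* x$; in part~(2), $(x-a_m)(b_m-x)/(|a_m|b_m)\to 1$ and $p_{2,m}^*|a_m|b_m\to\b^*$ give limit $\b^*$. Local uniform convergence on compact subsets of $(l_-^\infty,l_+^\infty)$ is immediate from these asymptotics.

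For step~(iii), in part~(1) the mass $(p_{1,m}^*+p_{2,m}^*-1)_+$ at $b_m$ is zero once $p_{1,m}^*+p_{2,m}^*<1$, while the mass at $a_m\to 0$ equals $(1-(p_{1,m}^*b_m)/(p_{2,m}^*b_m))_+\to(1-z_1^*/z_2^*)_+$, producing precisely the atom at $0$ of $\mu_{MP,z_1^*,z_2^*}$. In part~(2), using $p_{2,m}^*\sim\b^*/(|a_m|b_m)$ and $p_{1,m}^*\sim(|a_m|+\a^*)/(b_m+|a_m|)$, the ratio $p_{1,m}^*/p_{2,m}^*\sim a_m^2 b_m/((b_m+|a_m|)\b^*)$ diverges irrespective of the relative rates of $|a_m|$ and $b_m$, so the atom at $a_m$ vanishes eventually; an analogous elementary bound shows $(p_{1,m}^*+p_{2,m}^*-1)_+=0$ for large $m$. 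For step~(iv), the limit measures have compact support and for $m$ large $\mu_{p_{1,m}^*,p_{2,m}^*}$ is uniformly supported in a bounded set (the absolutely continuous support $[l_-^{(m)},l_+^{(m)}]$ together with the surviving atom at $a_m\to 0$ in part~(1)); weak convergence then follows from pointwise density convergence via Scheff\'e's lemma combined with convergence of the atomic parts, and moment convergence from dominated convergence under the uniform support bound. The main nuisance in this otherwise routine computation is the delicate regrouping for $l_\pm$ in part~(2), where the three competing scales $a_m\to-\infty$, $b_m\to\infty$, and $p_{2,m}^*\to 0$ must balance so that the surviving limit $\a^*\pm 2\sqrt{\b^*}$ is independent of the individual rates of $|a_m|$ and $b_m$.
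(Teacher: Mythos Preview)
The paper omits the proof of this corollary entirely, remarking only that it ``is straightforward and will be omitted.'' Your direct asymptotic analysis of the explicit formulae from Theorem~\ref{thrm_compact_1}---tracking $l_\pm^{(m)}$, the density, and the atomic masses, then concluding via Scheff\'e and uniform support bounds---is correct and is precisely the routine verification the authors had in mind.
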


\vspace{4em}

\begin{remark} \strut
\begin{enumerate}
\item
The measure $\mu_{MP,z_1^*,z_2^*}$ is called Marchenko-Pastur distribution  (see \cite{hiaipetz2000} or \cite{nicspe2006}). For $z_1^*\geq z_2^*$ (absolutely continuous case) it is the 
equilibrium measure on $\R_+$ (in the sense of \eqref{eq_measure}) to the field 
    \begin{align*}
        Q(t)=\frac t{z_2^*}-\frac{z_1^*-z_2^*}{z_2^*}\log t.
    \end{align*}
				Besides its role in free probability theory as the free analog of the Poisson distribution it is 
particularly well-known for its universality in random matrix theory. More precisely, let $X$ denote an $m\times n$ random matrix 
with real i.i.d.~entries having mean 0 and variance $\s^2>0$. Assume that as $m,n\to\infty$ we have $m/n\to\l\in(0,\infty)$. Then 
the empirical distribution of the eigenvalues of the sample covariance matrix $XX^T/n$ converges a.s.~and in expectation 
weakly to $\mu_{MP,z_1,z_2}$, where $z_1:=\s^2(1+\sqrt{\l})/(1+\sqrt\l)^2$ and $z_2:=\l z_1$. For this result and generalizations we 
refer to \cite{BaiSilverstein} and references therein.
				
 \item  The measure $\mu_{SC,\a^*,\b^*}$ is called semicircle distribution. It is the equilibrium measure to the field 
    \begin{align*}
        Q(t)=\frac{t^2}{2\b^*}-\frac{\a^*t}{\b^*}.
    \end{align*}
In free probability, it plays the role of the Gaussian distribution. In random matrix theory it is the universal limit of so-called 
Wigner matrices: Let $X$ be an $n\times n$ random matrix with real i.i.d.~mean 0 and variance $\s^2>0$ entries on and above the 
diagonal and the entries below the diagonal are chosen such that $X$ is symmetric. Then the empirical distribution of the 
eigenvalues of $X/\sqrt{n}$ converges a.s.~and in expectation weakly to $\mu_{SC,\a,\b}$
as  $n\to\infty$, where $\a=0$ and $\b=\s^2$, see e.g.~\cite{BaiSilverstein}.
	
		The universality in these random matrix statements lies in the fact that the limiting distribution is always the same regardless of the distribution of the matrix entries.
		\item The measures $\mu_{p_1^*,p_2^*},\mu_{MP,z_1^*,z_2^*}$ and $\mu_{SC,\a^*,\b^*}$ all belong to the so-called free Meixner class. It consists of the free analogues of the six classical Meixner class distributions which are  Gaussian, Poisson, gamma, binomial, negative binomial and hyperbolic secant distribution. The distributions of the free Meixner class enjoy some interesting characterizing properties, for instance having a generating function of resolvent type for the corresponding orthogonal polynomials (see \cite{Anshelevich} for details) in analogy to the generating functions of the classical Meixner class being of exponential type (see \cite{Meixner}).
		\end{enumerate}
\end{remark}

Let us now turn to infinite moment spaces, starting with $\mathcal{M}_n(\R_+)$ (recall $\R_+=[0,\infty)$). Following \cite{detnag2012}, we may define the canonical moments $z_1,\dots,z_n$ of a moment sequence $m_1,\dots,m_n$ in the interior of $M_n(\R_+)$ as
\begin{align*}
z_k:=\frac{m_k-m_k^-}{m_{k-1}-m_{k-1}^-},\quad k=1,\dots,n,
\end{align*}
$m_0^-=0, m_0=1$.
Here one uses that given $m_1,\dots,m_{k-1}$, the section of possible values of $m_k$
for given moments $(m_1,\ldots,m_{k-1}) \in {\rm Int} (\mathcal{M}_{k-1} (\mathbb{R}_+))$
 is an interval of the form $[m_k^-,\infty)$ (see \cite{karlin1966}, Chapter V). Clearly, $z_k\in\R_+$. 
 The correspondence 
 \begin{equation} \label{phimaphalf}
\varphi_n ^{\mathbb{R}_+}:  {\vec z_n} =(z_1,\dots ,z_n) \mapsto { \vec  m_n} =(m_1,\dots ,m_n) 
\end{equation} 
between canonical and ordinary moments is one-to-one from $(0,\infty)^n$ onto  ${\rm Int} (\mathcal{M}_n(\mathbb{R}_+))$ (for all $n \in \mathbb{N}$). The Jacobian of this transformation is readily computed as
\begin{align}\label{Jacobian_R_+}
\left\lv\prod_{k=1}^n\frac{\partial m_k}{\partial z_k}\right\rvert=\prod_{k=1}^n(m_{k-1}-m_{k-1}^-)=\prod_{k=2}^nz_1z_2\dots z_{k-1}=\prod_{k=1}^n z_k^{n-k} ~.
\end{align}
To define a probability measure on $ {\rm Int} ( \mathcal{M}_n(\R_+) )$, consider continuous functions $V_1,V_2:{\R_+}\rightarrow{\R}$, such that for some $\varepsilon>0$ and all $z$ large enough the inequality
\begin{align}
\frac{V_i(z)}{\log z}\geq 2+\varepsilon\label{growth_condition},\ i=1,2
\end{align}
holds. Then define  a probability measure $\mathbb{P}_{n,\R_+,V_{1,2}}$ on $\mathcal{M}_n(\R_+)$ by $\mathbb{P}_{n,\R_+,V_{1,2}}\lb\partial\mathcal{M}_n (\R_+)\rb=0$ and on $ {\rm Int} ( \mathcal{M}_n(\R_+) )$
via  the density
\begin{align}
P_{n,\R_+,V_{1,2}}(m_1,\dots,m_n):=\frac{1}{Z_{n,\R_+,V_{1,2}}}\exp\Big[-n \sum_{j=1}^{\floor{\frac{n+1}2}} V_1(z_j)-n \sum_{j=1}^{\floor{\frac{n}2}} V_2(z_j)\Big],\label{def_R_+}
\end{align}
where 
$Z_{n,\R_+,V_{1,2}}$ is the normalizing constant such that $P_{n,\R_+,V_{1,2}}$ is a probability density   with respect to the Lebesgue measure on ${\rm Int} ( \mathcal{M}_n(\R_+)) $. This is possible due to \eqref{Jacobian_R_+} and \eqref{growth_condition}. Because of \eqref{Jacobian_R_+}, the canonical moments $z_1,z_2,\dots,z_k$ are independent under $\mathbb{P}_{n,\R_+,V_{1,2}}$ and for large $n$ and fixed $k$
nearly identically distributed. 

Note that \cite{detnag2012} considered the special case of \eqref{def_R_+} with $V_1(t)=V_2(t)=t-\frac{c}{n}\log t$ and showed that under this measure the (ordinary) moments converge to those of the Marchenko-Pastur distribution. Here we will show that the moments of the Marchenko-Pastur distribution are in fact universal for all generic functions $V_1,V_2$.

\begin{theorem}\label{thrm_R_+}
	Let $V_1,V_2\in C^2((0,\infty))$ be continuous at $0$, satisfy \eqref{growth_condition} and assume that
	\begin{align*}
	W_1(z) := V_1(z)-\log z\qquad \text{ and }\qquad W_2(z) := V_2(z)-\log z
\end{align*}
each have a unique minimizer $z_1^*\in(0,\infty)$ and $z_2^*\in(0,\infty)$, respectively. Let the vector $m^{(n)}=(m_1^{(n)},\dots,m_n^{(n)})$ be drawn from $\mathbb{P}_{n,\R_+,V_{1,2}}$. Then we have for any $k\geq1$ as $n\to\infty$
	\begin{align*}
	(m_1^{(n)},\dots,m_k^{(n)})\to(m^*_1,\dots,m^*_k)
	\end{align*}
	almost surely and in $L^1$, where $m^*_1,\dots,m^*_k$ are the first $k$ moments of the Marchenko-Pastur distribution $\mu_{MP,z_1^*,z_2^*}$ defined in \eqref{MP}, that is
\begin{align}
    m_j^*= \sum \limits_{i = 0}^{\lfloor \frac{j - 1}{2} \rfloor} \binom{j - 1}{i}(z_1^*)^{i + 1}(z_2^*)^i (z_1^* + z_2^*)^{j - 1 - i} \frac{1}{i + 1} \binom{2i}{i}. \nonumber 
\end{align}

\end{theorem}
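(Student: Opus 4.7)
My plan is to mirror the structure used for Theorem \ref{thrm_compact_1}: pass to canonical coordinates where the measure factorises, apply Laplace's method to each marginal, transfer the convergence back to ordinary moments by continuous mapping, and finally identify the limit as $\mu_{MP,z_1^*,z_2^*}$. The first step exploits the Jacobian formula \eqref{Jacobian_R_+}: after the substitution \eqref{phimaphalf}, the density of $(z_1,\dots,z_n)$ on $(0,\infty)^n$ equals
\begin{align*}
\frac{1}{Z_{n,\R_+,V_{1,2}}}\prod_{k=1}^n z_k^{n-k}\exp\Big[-n\sum_{j=1}^{\floor{(n+1)/2}} V_1(z_{2j-1})-n\sum_{j=1}^{\floor{n/2}} V_2(z_{2j})\Big],
\end{align*}
which manifestly \emph{factorises}. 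Hence the canonical moments are independent under $\mathbb{P}_{n,\R_+,V_{1,2}}$, and the marginal of $z_k$ is proportional to $z_k^{-k}\exp[-nW_{i(k)}(z_k)]$, where $i(k)\in\{1,2\}$ records the parity of $k$.

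In the second step I would invoke Laplace asymptotics. Since $W_{i}\in C^2((0,\infty))$ has a unique minimizer $z_i^*\in(0,\infty)$, is continuous at $0$, and \eqref{growth_condition} controls its behaviour at $\infty$, the tails of the marginal of $z_k^{(n)}$ are exponentially small; more precisely, for every fixed $k$ and $\varepsilon>0$ there is some $c=c(\varepsilon,k)>0$ such that
\begin{align*}
\mathbb{P}_{n,\R_+,V_{1,2}}\bigl(\lv z_k^{(n)}-z_{i(k)}^*\rv>\varepsilon\bigr)\leq e^{-cn}
\end{align*}
for all $n$ large enough. The bounded polynomial correction $z_k^{-k}$ is harmless since $z_{i(k)}^*>0$. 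Realising the $z_k^{(n)}$ on a common probability space (as tacitly assumed), Borel--Cantelli yields $z_k^{(n)}\to z_{i(k)}^*$ almost surely; uniform integrability, again via \eqref{growth_condition}, upgrades this to $L^1$.

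The third step transfers the convergence to the ordinary moments. By the recursive definition of canonical moments on $\R_+$, each $m_j$ is a fixed (that is, $n$-independent) polynomial $P_j$ in $z_1,\dots,z_j$. Composing with the (componentwise) continuous mapping theorem, and using that polynomial moments of the $z_k^{(n)}$ are bounded uniformly in $n$ by \eqref{growth_condition}, I obtain $m_j^{(n)}\to m_j^*$ almost surely and in $L^1$, where $m_j^*:=P_j(z_1^*,z_2^*,z_1^*,z_2^*,\dots)$.

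Identifying this limit as the $j$-th moment of $\mu_{MP,z_1^*,z_2^*}$ is the main obstacle and the place where most thought is required. The conceptually cleanest argument is that a measure on $\R_+$ whose canonical moments alternate as $z_1^*,z_2^*,z_1^*,z_2^*,\dots$ has a Stieltjes continued fraction whose Jacobi parameters are constant past the initial term; solving the resulting quadratic fixed-point equation yields precisely the Stieltjes transform of $\mu_{MP,z_1^*,z_2^*}$, whose density and atomic part are given by \eqref{MP}. Alternatively, one can expand $P_j(z_1^*,z_2^*,\dots)$ combinatorially via weighted Dyck paths, in which case the Catalan numbers $\tfrac1{i+1}\binom{2i}{i}$ in the explicit formula arise naturally from the enumeration, yielding the stated closed form. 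The probabilistic content is thus reduced to independence plus Laplace's method; the identification is an exercise in the canonical-moment calculus on the half-line, where care at the boundary $z=0$ is the only delicate point.
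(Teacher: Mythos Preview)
Your proposal is correct and follows essentially the same route as the paper: the paper also factorises the density via \eqref{Jacobian_R_+}, obtains exponential concentration of each $z_k^{(n)}$ at $z_{i(k)}^*$ (packaged there as the large deviations principle of Theorem~\ref{thrm_LDP}), concludes a.s.\ and $L^1$ convergence via Borel--Cantelli and uniform integrability, and then identifies the limiting measure by writing the Stieltjes transform as a continued fraction with eventually constant Jacobi parameters and solving the resulting quadratic. The only cosmetic difference is that the paper routes the identification through the already-computed semicircle Stieltjes transform $\Phi_{SC,\a,\b}$ (with $\a=z_1^*+z_2^*$, $\b=z_1^*z_2^*$) rather than solving the fixed-point equation from scratch, and it does not pursue the Dyck-path alternative you mention.
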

Next, we consider the  moment space corresponding to measures supported on $\R$. We will use the recurrence coefficients of the corresponding orthogonal polynomials as a coordinate system.  To be precise, note that for any measure $\mu\in \mathcal{P}(\R)$ there is a sequence of monic polynomials $P_0(x),P_1(x),\dots$ with $\operatorname{deg} P_j=j$ that is orthogonal in $L^2(\mu)$. If $\mu$ is supported on finitely many points, the sequence is finite. In any case, $P_j(x)$ depends on the measure $\mu$ via its moment sequence $(m_1,\dots,m_{2j-1})$ only. The orthogonal polynomials satisfy a three-term recurrence relation of the form
\begin{align}\label{recurrence}
&P_{j+1}(x)=(x-\a_{j+1})P_j(x)-\b_jP_{j-1}(x),\qquad j=1,\dots\\
&P_0(x)=1,\quad P_1(x)=x-\a_1\notag
\end{align}
with recurrence coefficients $\a_1,\a_2,\dots\in\R$ and $\b_1,\b_2,\dots>0$. 
For more  details regarding orthogonal polynomials we refer to \cite{chihara1978}. 
The mapping 
\begin{equation} \label{phimapreal}
\varphi_{2n-1} ^{\mathbb{R}}:  (\a_1,\b_1, \a_2, \dots,\b_{n - 1}, \a_n) \mapsto { \vec  m_{2n-1}} =(m_1,\dots ,m_{2n-1})
\end{equation}
is one-to-one from $(\mathbb{R} \times (0, \infty))^{n - 1} \times \mathbb{R}$ onto $\operatorname{Int}(\mathcal{M}_{2n-1}(\mathbb{R}))$ 
(for all $n \in \mathbb{N}$).
 Moreover, as observed by \cite{detnag2012},
 $(\a_1,\b_1,\a_2,\dots,\b_{n-1},\a_n)$ constitutes a system of independent coordinates on the moment space 
$\mathcal{M}_{2n-1}(\R)$. The corresponding Jacobian is given by
\begin{align*}
    \det D\varphi_{2n-1}^\mathbb{R} = \prod \limits_{j = 1}^{n - 1} \beta_j^{2n-2j-1}.
\end{align*}
Similarly, we may define a map for moment spaces of even order.
\begin{lemma}\label{phi_real_even}
    There is a bijection
    \begin{align}
            \varphi_{2n}^\mathbb{R}\ :\ (\mathbb{R} \times (0, \infty))^n  & \to  
\operatorname{Int}(\mathcal{M}_{2n}(\mathbb{R})),\notag\\
            (\alpha_1, \beta_1, \alpha_2, \ldots, \alpha_n, \beta_n)  & \mapsto    (m_1, \ldots, m_{2n})
        \label{phimapreal2}
    \end{align}
    between the recursion coefficients of the orthogonal polynomials and the corresponding moments. The Jacobian of 
$\varphi_{2n}^{\mathbb{R}}$ is
    \begin{align*}
        \det D\varphi_{2n}^\mathbb{R} = \prod \limits_{j = 1}^{n - 1} \beta_j^{2n - 2j}.
    \end{align*}
\end{lemma}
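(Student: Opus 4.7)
The plan is to construct $\varphi_{2n}^{\R}$ by adjoining to the (already established) bijection $\varphi_{2n-1}^{\R}$ the single extra coordinate $\b_n$, using the fact that $\b_n$ parametrises the admissible range of $m_{2n}$ once $m_1,\ldots,m_{2n-1}$ are fixed. The key observation is that, for a moment sequence in $\operatorname{Int}(\mathcal{M}_{2n-1}(\R))$, the set of admissible values of $m_{2n}$ is a half-line $(m_{2n}^-,\infty)$ (see \cite{karlin1966}), and $\b_n$ will turn out to be an affine bijection from that half-line onto $(0,\infty)$.

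First I would express $\b_n$ in terms of Hankel determinants. Setting $H_j:=(m_{r+s})_{r,s=0}^j$ and $h_j:=\lVert P_j\rVert^2_{L^2(\mu)}$, the change of basis from $\{1,x,\ldots,x^j\}$ to $\{P_0,\ldots,P_j\}$ gives $h_j=\det H_j/\det H_{j-1}$, while the three-term recurrence \eqref{recurrence} forces $\b_j=h_j/h_{j-1}$, so that
\begin{align*}
\b_n=\frac{\det H_n\,\det H_{n-2}}{(\det H_{n-1})^2}.
\end{align*}
Expanding $\det H_n$ along its last row shows that it is affine in $m_{2n}$ with positive slope $\det H_{n-1}$, hence the map $m_{2n}\mapsto\b_n$ is an affine bijection from $(m_{2n}^-,\infty)$ onto $(0,\infty)$.

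Next I would define $\varphi_{2n}^{\R}$ as a composition: apply $\varphi_{2n-1}^{\R}$ to $(\a_1,\b_1,\ldots,\b_{n-1},\a_n)$ to produce $(m_1,\ldots,m_{2n-1})$, then invert the affine map above to recover $m_{2n}$ from $\b_n$. Bijectivity of $\varphi_{2n}^{\R}$ follows by combining the bijectivity of $\varphi_{2n-1}^{\R}$ with that of the affine map. For the Jacobian, ordering the domain coordinates as $(\a_1,\b_1,\ldots,\a_n,\b_n)$ and the codomain as $(m_1,\ldots,m_{2n-1},m_{2n})$, the crucial point is that $m_1,\ldots,m_{2n-1}$ do not depend on $\b_n$. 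Hence $D\varphi_{2n}^{\R}$ is block triangular, with $D\varphi_{2n-1}^{\R}$ in the upper $(2n-1)\times(2n-1)$ block and $\partial m_{2n}/\partial\b_n$ in the remaining diagonal entry. From the formula for $\b_n$ one reads off $\partial m_{2n}/\partial\b_n=h_{n-1}=\prod_{j=1}^{n-1}\b_j$, and multiplying by the given $\det D\varphi_{2n-1}^{\R}=\prod_{j=1}^{n-1}\b_j^{2n-2j-1}$ yields $\prod_{j=1}^{n-1}\b_j^{2n-2j}$, as claimed.

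The main technical obstacle is the explicit identification of $\b_n$ with the Hankel-determinant ratio and the verification that $\det H_n$ depends affinely on $m_{2n}$ with slope $\det H_{n-1}$; once these two facts are secured, both the bijectivity statement and the Jacobian calculation reduce mechanically to the odd-order case via the block-triangular structure.
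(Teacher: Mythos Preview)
Your proposal is correct and takes a genuinely different route from the paper's proof. The paper proceeds directly: it records the two integral identities
\[
\int x^k P_k\,d\mu=\b_1\cdots\b_k,\qquad \int x^{k+1}P_k\,d\mu=\b_1\cdots\b_k(\a_1+\cdots+\a_{k+1}),
\]
uses them to see that $m_{2k-1}$ depends only on $(\a_1,\b_1,\ldots,\a_k)$ and $m_{2k}$ only on $(\a_1,\b_1,\ldots,\a_k,\b_k)$, so that $D\varphi_{2n}^{\R}$ is lower triangular, and then reads off \emph{all} diagonal entries $\partial m_{2k-1}/\partial\a_k=\partial m_{2k}/\partial\b_k=\b_1\cdots\b_{k-1}$ from those same identities. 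Your argument instead reduces to the already-granted odd case $\varphi_{2n-1}^{\R}$ and handles only the single new coordinate $\b_n$, via the Hankel-determinant formula $\b_n=\det H_n\det H_{n-2}/(\det H_{n-1})^2$ together with the cofactor observation that $\det H_n$ is affine in $m_{2n}$ with slope $\det H_{n-1}$; this gives $\partial m_{2n}/\partial\b_n=h_{n-1}=\prod_{j<n}\b_j$ and the block-triangular determinant formula does the rest. Your approach is more economical given that the odd case is taken from \cite{detnag2012}, and the Hankel-determinant machinery makes the affine dependence on $m_{2n}$ transparent; the paper's approach is more self-contained and yields the full triangular structure uniformly without invoking the odd case as a black box.
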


The values $\b_j$ have a simple interpretation in terms of moments, as
\begin{align*}
\b_j=\frac{m_{2j}-m_{2j}^-}{m_{2j-2}-m_{2j-2}^-},\qquad j=1,\dots,n,
\end{align*}
is the ratio of two consecutive even moments. The coefficients $\a_j$ give information about symmetry of the measure, e.g.~for $\mu$ symmetric around 0, one has $\a_j=0$ for all $j$. Taking into account these two different roles, we will again consider two continuous functions $V_1:{\R}\rightarrow{\R}$ and $V_2:{\R_+}\rightarrow{\R}$ such that for some $\varepsilon>0$ and $\lv \a\rv,\b$ large enough
\begin{align}\label{nbdg}
\frac{V_1(\a)}{\log \lv\a\rv}\geq 1+\varepsilon,\qquad \frac{V_2(\b)}{\log \b}\geq 3+\varepsilon.
\end{align}
With these notations we define the probability measure $\mathbb{P}_{n,\R,V_{1,2}}$   on $\mathcal{M}_{n}(\R)$ by\\ $\mathbb{P}_{n,\R,V_{1,2}}\lb\partial\mathcal{M}_{n} (\R)\rb=0$ and on $ {\rm Int} ( \mathcal{M}_{n}(\R) )$
via  the density
\begin{align*}
P_{n,\R,V_{1,2}}(m_1,\dots,m_{n}):=\frac{1}{Z_{n,\R,V_{1,2}}}\exp\Big[-n \sum_{j=1}^{\lfloor \frac{n + 1}{2} \rfloor} V_1(\a_j)-n 
\sum_{j=1}^{\lfloor \frac{n}{2} \rfloor}V_2(\b_j)\Big],
\end{align*}
and obtain the   following universal law of large numbers.
\begin{theorem}\label{thrm_R}
	Let $V_1\in C^2(\R),V_2\in C^2((0,\infty))$ be continuous at 0 and satisfy \eqref{nbdg}. Furthermore, assume that
	\begin{align*}
	W_1(\a) := V_1(\a)\qquad \text{ and }\qquad W_2(\b) := V_2(\b)-2\log\b
	\end{align*}
	each have unique minimizers $\a^*\in\R$ and $\b^*\in(0,\infty)$, respectively. Let $m^{(n)}=(m_1^{(n)},\dots,m_{n}^{(n)})$ 
be drawn from $\mathbb{P}_{n,\R,V_{1,2}}$. Then for any $k\geq1$ as $n\to\infty$
	\begin{align*}
	(m_1^{(n)},\dots,m_k^{(n)})\to(m^*_1,\dots,m^*_k)
	\end{align*}
	almost surely and in $L^1$, where $m^*_1,\dots,m^*_k$ are the first $k$ moments of the semicircle distribution $\mu_{SC,\a^*,\b^*}$ defined in \eqref{SC},
	that is
    \begin{align}
        m_j^*= \sum \limits_{i = 0}^{\lfloor j/2\rfloor}  \binom{j}{2i}(\beta^*)^{i} (\alpha^*)^{j - 2i}\frac{1}{i + 1} \binom{2i}{i} .\label{moments_SC}
    \end{align}
\end{theorem}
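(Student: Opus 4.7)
The plan is to pull back the measure $\mathbb{P}_{n,\R,V_{1,2}}$ from the moment space $\mathcal{M}_n(\R)$ to the space of recurrence coefficients via the bijections $\varphi_{2n-1}^{\R}$ (see \eqref{phimapreal}) and $\varphi_{2n}^{\R}$ (see Lemma \ref{phi_real_even}). Because the Jacobians $\prod_{j=1}^{n-1}\b_j^{2n-2j-1}$ and $\prod_{j=1}^{n-1}\b_j^{2n-2j}$ involve the $\b_j$'s alone, the push-forward density factorizes into independent marginals: the $\a_j$'s become i.i.d.\ with density proportional to $e^{-nV_1(\a)}$, while each $\b_j$ has independent density proportional to $e^{-nV_2(\b)}\b^{c_{j,n}}$, where $c_{j,n}$ is the exponent of $\b_j$ in the Jacobian. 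Absorbing the logarithmic Jacobian contribution into the potential yields, in the appropriate large-$n$ scaling, an effective rate function agreeing with $W_2(\b)=V_2(\b)-2\log\b$ on the $\b$-side and with $W_1(\a)=V_1(\a)$ on the $\a$-side.

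Next I would apply a Laplace/concentration argument to each marginal. The assumption that $W_1,W_2$ each have a unique minimizer, combined with $V_1,V_2\in C^2$ and the growth condition \eqref{nbdg}, yields sub-Gaussian concentration of $\a_j$ at $\a^*$ and of $\b_j$ at $\b^*$ on the scale $1/\sqrt n$. A standard Borel--Cantelli argument then upgrades convergence in probability to almost-sure convergence for each fixed $j$. The $L^1$ bound follows from uniform integrability, itself a consequence of \eqref{nbdg}, which provides uniform exponential control on the marginal moments of the $\a_j$ and $\b_j$.

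Finally, I would transport convergence from recurrence coefficients to moments. For each fixed $k$, the moments $m_1^{(n)},\dots,m_k^{(n)}$ are explicit polynomials in the initial segment $(\a_j,\b_j)_{j\le j_k}$ of recurrence coefficients (with $j_k$ depending only on $k$), so continuity of these polynomial maps delivers almost-sure and $L^1$ convergence to the polynomials evaluated at the constant sequences $\a_j\equiv\a^*,\b_j\equiv\b^*$. Since the semicircle $\mu_{SC,\a^*,\b^*}$ is uniquely characterized by having constant recurrence coefficients $\a^*,\b^*$, those limiting values coincide with the moments $m_j^*$ in \eqref{moments_SC}. The main obstacle is the precise Laplace analysis of $\b_j$ in the presence of the $n$-dependent logarithmic term $c_{j,n}\log\b$: one must correctly identify the limiting effective potential as $W_2$, ensure that all finitely many relevant $\b_j$'s concentrate at the same point $\b^*$ uniformly in $j\le j_k$, and handle the even and odd parity cases uniformly via the two Jacobian formulas.
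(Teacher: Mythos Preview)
Your proposal is correct and follows essentially the same architecture as the paper: pass to the independent recurrence coefficients via the Jacobian, establish concentration of each $\a_j,\b_j$ at $\a^*,\b^*$ (the paper packages this as the large deviations principle of Theorem~\ref{thrm_LDP} and then invokes Borel--Cantelli, which is exactly your Laplace/concentration plus Borel--Cantelli step), deduce $L^1$ convergence from uniform integrability via the growth condition, and then push forward along the polynomial map to the moments.

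The one place where your route differs from the paper's is the identification of the limiting measure. You invoke as a known fact that the semicircle $\mu_{SC,\a^*,\b^*}$ is the unique measure with constant recurrence coefficients $\a^*,\b^*$. The paper instead proves this from scratch: it develops Lemma~\ref{lemma_Stieltjes_expansion} (the continued-fraction expansion of the Stieltjes transform in terms of the recurrence coefficients), applies it to the constant sequence to obtain the fixed-point equation $\Phi(z)=1/(z-\a^*-\b^*\Phi(z))$, solves the resulting quadratic, and recovers the density by Stieltjes inversion. Your shortcut is perfectly legitimate since this characterization is classical, but the paper's explicit computation has the advantage of being self-contained and of providing the machinery (Lemma~\ref{lemma_Stieltjes_expansion}) that is reused verbatim in the proofs of Theorems~\ref{thrm_compact_1} and~\ref{thrm_R_+}, where the limiting measures are less standard.
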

We finish this section with some concluding remarks concerning the class of models we consider. We study random moment sequences with independent and nearly identically distributed canonical moments or recurrence coefficients, respectively. Dropping either of the two properties will in general result in non-universal limiting sequences even on unbounded intervals, if there is any limit at all. Nevertheless, other related models have been used for successful studies of random matrix models. More precisely, so-called Gaussian beta ensembles admit tridiagonal matrix models, see \cite{DE}. More recently, \cite{KRV} have used tridiagonal matrix models for studying non-Gaussian beta ensembles. They consider $\exp(-n\operatorname{Tr}Q(T))\det(D\varphi_n^\R)$ as density on the space of recursion coefficients, where $T$ is the symmetric tridiagonal matrix (truncated Jacobi operator) with the $\a_j$'s on the main diagonal and $\b_j$'s on the neighboring diagonals, $Q$ is a strictly convex polynomial and $\operatorname{Tr}$ denotes the trace. It is not hard to see from the results in \cite{KRV} that the limiting moments corresponding to this model are those of the equilibrium measure to $Q$  (see \eqref{eq_measure}), only for $Q$ quadratic (this case is the one studied in \cite{DE}) the moments of the semicircle appear.

The connection between certain random matrix ensembles and canonical moments/recursion coefficients has also been used in \cite{GNR1} and \cite{GNR2} for deriving so-called sum rules for free binomial, semicircle and Marchenko-Pastur distribution.

\section{Asymptotic Normality, Moderate and Large Deviations} \label{sec3}
In this section, we examine 
the fluctuations of the random moment sequences  around their non-random limits. We state the central limit theorem and moderate and large deviations
results.  For the uniform distribution on the moment
space  $\mathcal{M}_n ([0,1])$, results of this type were obtained by   \cite{chakemstu1993} and
 \cite{gamloz2004}, respectively.  
 The following theorem shows that the fluctuations of  random moment vectors around their limits are Gaussian. We will adopt a 
short notation that allows us to state the three cases $E=[a,b]$, $E=\R_+$, $E=\R$ simultaneously. Note that the functions $W_1,W_2$ as well as the limiting moments $m_j^*$ differ, depending on $E$.
\begin{theorem}\label{thrm_CLT}\strut
In the situation of Theorem \ref{thrm_compact_1}, Theorem~\ref{thrm_R_+} or Theorem~\ref{thrm_R}, assume that $W_i''(y_i^*) \ne 
0$ for $i = 1, 2$, where 
\begin{align*}
 y_i^*:=\begin{cases}
         p_i^*&, \text{ if }E=[a,b],\\
	 z_i^*&,\text{ if } E=\R_+,\\
	 \a^*&,\text{ if } E=\R,\, i=1,\\
	 \b^*&,\text{ if }E=\R,\,i=2.
        \end{cases}
\end{align*}
Then in any of the three cases $E=[a,b]$, $E=\R_+$, $E=\R$, for any 
$k\geq1$ as $n\to\infty$
            \begin{align*}
                \sqrt{n}\big((m_1^{(n)}, \ldots, m_k^{(n)}) - (m_1^*, \ldots, m_k^*)\big) \xrightarrow{d} \mathcal{N}(0, \Sigma_k),
            \end{align*}
where the matrix  $\Sigma_k$ is given by
            \begin{align*}
                 \Sigma_k = (D\varphi_k^{E}(\vec{y}^*))^t \operatorname{diag}(W_1''(y_1^*), W_2''(y_2^*), W_1''(y_1^*), 
\ldots)^{-1}(D\varphi_k^{E}(\vec{y}^*)).
            \end{align*} 
Here, the maps $\varphi_k^{E}$ have been defined in \eqref{phimap}, \eqref{phimaphalf} and \eqref{phimapreal}, \eqref{phimapreal2}, 
the diagonal matrix is of size $k\times k$  and $\vec{y}^* = (y_1^*, y_2^*, y_1^*, \ldots) \in \mathbb{R}^k$.

In the case $E=\R_+$ and  $z_1^* = z_2^*$, we have
            \begin{align*}
                (D\varphi_k^{\mathbb{R}_+}(\vec{y}^*))_{i, j} = (z_1^*)^{i-1} \left(\binom{2i}{i - j} - \binom{2i}{i - j - 
1}\right).
            \end{align*}
\end{theorem}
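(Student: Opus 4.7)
The strategy is to pass from the moment coordinates to the independent coordinate systems of the theorem (canonical moments for $E=[a,b]$ and $E=\R_+$, recurrence coefficients for $E=\R$), apply a classical Laplace-method CLT coordinate-wise, and then transport the Gaussian back to the moments via the multivariate delta method. The key observation is that after pushing $\mathbb{P}_{n,E,V_{1,2}}$ through $\varphi_n^E$ and absorbing the Jacobian factors from \eqref{def_uniform_canonical}, \eqref{Jacobian_R_+}, \eqref{phimapreal} and Lemma~\ref{phi_real_even}, the resulting density on the coordinate space is a genuine product: the coordinates $y_1,y_2,\ldots$ are independent under $\mathbb{P}_{n,E,V_{1,2}}$, with marginal density of $y_j$ proportional to $\exp(-n W_{\iota(j)}(y_j))$ times a $j$-dependent continuous factor (coming from the mismatch between the $n$ in the exponent of the potential and the $n-j$ or $2n-2j$ Jacobian power). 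Here $\iota(j)\in\{1,2\}$ distinguishes odd from even coordinates (equivalently, $\alpha$-type from $\beta$-type coordinates in the $\R$ case).

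Next, I would invoke the standard Laplace asymptotics: for a family of one-dimensional probability measures with density proportional to $\exp(-n W(y))\,h(y)$, where $W \in C^2$ has a unique interior minimizer $y^*$ with $W''(y^*)>0$ and $h$ is positive and continuous at $y^*$, one has $\sqrt{n}(Y_n - y^*) \xrightarrow{d} \mathcal{N}(0, 1/W''(y^*))$. The growth bounds \eqref{growth_condition} and \eqref{nbdg} ensure tail concentration outside any fixed neighborhood of $y^*$ in the noncompact cases, which renders the continuous factor $h$ irrelevant to the limit; the compact case is automatic. Independence then upgrades the one-dimensional CLTs to
\begin{align*}
\sqrt{n}\bigl(\vec{y}_k - \vec{y}^*\bigr) \xrightarrow{d} \mathcal{N}\bigl(0,\operatorname{diag}(W_1''(y_1^*), W_2''(y_2^*), W_1''(y_1^*), \ldots)^{-1}\bigr).
\end{align*}

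Finally, since $(m_1^{(n)},\ldots,m_k^{(n)}) = \varphi_k^E(\vec{y}_k)$ and $\varphi_k^E$ is smooth in a neighborhood of $\vec{y}^*$, the multivariate delta method propagates the diagonal Gaussian through $D\varphi_k^E(\vec{y}^*)$ and yields the sandwich expression for $\Sigma_k$ claimed in the theorem. For the $\R_+$ subcase with $z_1^*=z_2^*=z^*$, the explicit form of $D\varphi_k^{\R_+}(\vec{y}^*)$ is obtained by differentiating the defining recursion $m_k - m_k^- = z_k(m_{k-1} - m_{k-1}^-)$ at the homogeneous point and solving the resulting linear recursion; the central binomial coefficients in the final formula then emerge from a routine Catalan-type identity. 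The principal technical point is the uniform-in-$n$ control of the tail contributions of each coordinate in the noncompact cases, for which the growth bounds \eqref{growth_condition} and \eqref{nbdg} are precisely tailored; once the integrals are localized, the remainder of the argument is a standard combination of Laplace method and delta method.
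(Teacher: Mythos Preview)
Your proposal is correct and follows essentially the same approach as the paper: the paper also reduces to a coordinate-wise Laplace-type CLT for the independent canonical moments (carried out explicitly via Scheff\'e's lemma and Taylor expansion of $W_i$ about $y_i^*$), combines these by independence, and then applies the delta method with $D\varphi_k^E$. The only notable difference is the explicit Jacobian in the $z_1^*=z_2^*$ case, where the paper avoids differentiating the recursion $m_k-m_k^-=z_k(m_{k-1}-m_{k-1}^-)$ directly (note $m_k^-$ depends nontrivially on $m_1,\ldots,m_{k-1}$) and instead invokes the auxiliary double sequence $g_{i,j}$ of \cite{detnag2012}, observing that each $\partial m_i/\partial z_r$ is homogeneous of degree $i-1$ in $z_1,z_2,\ldots$, so the value at $(z_1^*,z_1^*,\ldots)$ is $(z_1^*)^{i-1}$ times the value at $(1,1,\ldots)$, which is the stated binomial difference.
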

Theorem \ref{thrm_CLT} shows that in all considered cases the $1/\sqrt n$-fluctuations of $m_1^{(n)}, \ldots, m_k^{(n)}$ around $m_1^*, \ldots, m_k^*$ are Gaussian. We will now study larger fluctuations. The appropriate tool for describing the exponentially small probabilities associated to these fluctuations is the large deviations principle. Recall that a sequence of random vectors $(X_n)_n$ with values in a Polish space $\mathcal X$ is said to satisfy a large deviations principle with speed $(b_n)_n, \lim_{n\to\infty}b_n=\infty$, and good rate function $I$, if $I:\mathcal X\to[0,\infty]$ is lower semi-continuous, has compact level sets \mbox{$\{x\in\mathcal X:I(x)\leq K\}, K\geq0$} and for any open set $O\subset\mathcal X$ and closed set $U\subset\mathcal X$
\begin{align}
     \liminf_{n\to\infty}\frac{1}{b_n}\log P(X_n\in O)&\geq-\inf_{x\in O}I(x),\label{LDP_open}\\
		 \limsup_{n\to\infty}\frac{1}{b_n}\log P(X_n\in U)&\leq-\inf_{x\in U}I(x),\label{LDP_closed}
\end{align}
cf.~\cite[p.~6]{DZ}. The next theorem is a result on moderate deviations. It shows that on scales up to $o(1)$ the exponential leading order asymptotics are still given by the Gaussian distributions from Theorem \ref{thrm_CLT}, in particular they are universal.
\begin{theorem}\label{thrm_MDP}
	Let the conditions of Theorem \ref{thrm_CLT} be satisfied. Then for any of the three cases $E=[a,b],\, 
E=\R_+,\, E=\R$, for any real-valued sequence $(a_n)_n$ with $\lim_{n\to\infty}a_n=\infty$ and $a_n=o(\sqrt{n})$, the sequence of 
random variables
	\begin{align*}
	a_n\big((m_1^{(n)}, \ldots, m_k^{(n)}) - (m_1^*, \ldots, m_k^*)\big)
	\end{align*}
	satisfies a large deviations principle on $\R^k$ with speed $b_n = \frac{n}{a_n^2}$ and good rate function
		\begin{align*}
		I(x) := \frac{1}{2}\|\operatorname{diag}(W_1''(y_1^*), W_2''(y_2^*), W_1''(y_1^*), \ldots)^{1/2} 
D\varphi_k^{E}(\vec{y}^*)^{-1} x\|_2^2.
		\end{align*}
\end{theorem}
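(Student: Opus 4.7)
The plan is a three-step reduction: (i) exploit the product structure of the density to reduce to a one-dimensional problem on the canonical coordinates; (ii) prove a one-dimensional MDP for each coordinate via a Laplace-type expansion of $W_i$ around its minimum; (iii) transfer the MDP from the canonical coordinates to the ordinary moments through the smooth map $\varphi_k^E$ by the delta method for moderate deviations. To begin, I would observe that the Jacobian of $\varphi_n^E$ factorizes into a product over the coordinates (see \eqref{def_uniform_canonical}, \eqref{Jacobian_R_+} and Lemma~\ref{phi_real_even}). Consequently, under $\mathbb{P}_{n,E,V_{1,2}}$ the canonical coordinates $(y_1^{(n)},\dots,y_n^{(n)})$ are \emph{independent}, and the one-dimensional density of $y_j^{(n)}$ has the form
\begin{align*}
f_{n,j}(y) = \frac{1}{Z_{n,j}} \exp\bigl[-n W_i(y) - c_j\, g(y)\bigr],
\end{align*}
where $i\in\{1,2\}$ depends on the parity of $j$, $c_j$ is of order $j$, and $g$ is a logarithmic function bounded on compact subsets of the interior of the relevant domain.

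Next, I would establish a one-dimensional MDP at speed $b_n = n/a_n^2$ for $a_n(y_j^{(n)} - y_j^*)$ with rate $\tfrac12 W_i''(y_j^*) x^2$. Because $y_i^*$ is an interior minimum of $W_i$ and $W_i''(y_i^*)\neq 0$, one automatically has $W_i''(y_i^*)>0$, and a Taylor expansion gives
\begin{align*}
n\bigl[W_i(y_j^* + x/a_n) - W_i(y_j^*)\bigr] = \frac{n}{2 a_n^2} W_i''(y_j^*) x^2 + \frac{n}{a_n^3}\, O(x^3),
\end{align*}
whose cubic remainder is $o(b_n)$ uniformly on compacts in the regime $a_n=o(\sqrt{n})$. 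Evaluating $Z_{n,j}$ by the same Laplace expansion and then either invoking Varadhan's lemma or computing the cumulant generating function of $a_n(y_j^{(n)}-y_j^*)$ and applying Gärtner--Ellis yields the lower bound \eqref{LDP_open} and the upper bound \eqref{LDP_closed} on compact sets. The extension to arbitrary closed sets follows from exponential tightness at the scale $b_n$, which is guaranteed by continuity of $V_i$ on $[0,1]$ in the compact case and by the growth conditions \eqref{growth_condition} and \eqref{nbdg} in the non-compact cases.

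By independence of the canonical coordinates, the tensorization property of large deviation principles produces a joint MDP for $a_n\bigl((y_1^{(n)},\dots,y_k^{(n)})-\vec y^*\bigr)$ on $\R^k$ with speed $b_n$ and rate $\tfrac12\|D^{1/2} x\|_2^2$, where $D=\operatorname{diag}(W_1''(y_1^*), W_2''(y_2^*), W_1''(y_1^*),\dots)$. Since the first $k$ ordinary moments depend only on the first $k$ canonical coordinates through $\varphi_k^E$, and the Jacobian $D\varphi_k^E(\vec y^*)$ is invertible (each factor in its product expression is nonzero at $\vec y^*$ under the hypotheses), the delta method for moderate deviations transfers the MDP to the moments. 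More precisely, the linearization of $y\mapsto a_n\bigl(\varphi_k^E(\vec y^*+y/a_n)-\varphi_k^E(\vec y^*)\bigr)$ at $0$ is $D\varphi_k^E(\vec y^*)$, and the contraction principle combined with exponential equivalence of the map and its linearization yields the MDP for $(m_1^{(n)},\dots,m_k^{(n)})$ with the claimed rate $I(x) = \tfrac12\|D^{1/2}\, D\varphi_k^E(\vec y^*)^{-1} x\|_2^2$.

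The principal technical obstacle lies in making the delta-method step rigorous at the exponential scale $b_n$: one has to show that the nonlinear remainder of $\varphi_k^E$ around $\vec y^*$ is exponentially negligible, which requires localizing to a shrinking neighborhood of $\vec y^*$ by means of exponential tightness. A secondary but more routine difficulty is the uniform control of the bounded correction $c_j g(y)$ in $f_{n,j}$ for $j\le k$, handled by $L^\infty$-estimates on compacts together with the tail bounds from the growth conditions to deal with the boundary of the canonical domain.
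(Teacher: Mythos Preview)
Your proposal is correct and follows essentially the same three-step strategy as the paper: reduce to independent one-dimensional coordinates, prove a coordinatewise MDP with quadratic rate $\tfrac12 W_i''(y_i^*)x^2$, then transfer to ordinary moments via the delta method for large deviations (the paper cites Gao--Zhao 2011, Theorem~3.1, for this last step). The only notable difference is in the execution of the one-dimensional MDP: you propose to go through G\"artner--Ellis or Varadhan's lemma, whereas the paper proceeds by direct estimation of the integrals $\int_U e^{-nI_1(x/a_n+p_1^*)}R(x/a_n+p_1^*)\,dx$ over closed and open sets, splitting into the region $\{|x|<\varepsilon a_n\}$ (handled by Taylor expansion) and its complement (shown to contribute $-\infty$ on the $b_n$-scale). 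Both routes work; the direct approach avoids verifying the hypotheses of G\"artner--Ellis and makes the role of the lower-order correction $c_j g(y)$ transparent, while your approach is slightly more conceptual.
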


The next result shows that for fluctuations of order 1 a new, non-universal rate function arises.

\begin{theorem}\label{thrm_LDP}
	Let the conditions of Theorem \ref{thrm_compact_1}, Theorem~\ref{thrm_R_+} or Theorem \ref{thrm_R} be satisfied. Then in 
each of the three cases, the sequence $(m_1^{(n)}, \ldots, m_k^{(n)})_n$ satisfies a large deviations principle on $\mathcal 
M_k(E)$ with speed $n$ and good rate function $I(m):=\infty$ for $m\in\partial \mathcal M_k(E)$ and for 
$m\in\textup{Int}\lb\mathcal M_k(E)\rb$
        \begin{align*}
            I(m) :={}& \sum \limits_{j = 1}^{\lfloor\frac{k + 1}{2}\rfloor}  \big\{ W_1(y_{2j - 1}) -  W_1(y_1^*) \big\} + \sum 
\limits_{j = 1}^{\lfloor\frac{k}{2}\rfloor}  \big\{ W_2(y_{2j})  -  W_2(y_2^*) \big\}.
        \end{align*}    
Here $y_i^*, i=1,2$ are as in Theorem \ref{thrm_CLT} and $y_j,j=1,\dots,k$ are defined similarly as $p_j$ ($E=[a,b]$), $z_j$ 
($E=\R_+$) or for $E=\R$ as $\a_{\frac{j+1}2}$ ($j$ odd) and $\b_{j/2}$ ($j$ even). 
       
\end{theorem}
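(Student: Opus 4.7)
The plan is to establish an LDP first on the natural auxiliary coordinates $(y_1,\dots,y_k)$ (canonical moments when $E=[a,b]$ or $E=\R_+$, recursion coefficients when $E=\R$) and then transfer it to the moments via the contraction principle, exploiting the fact that $\varphi_k^E$ (as in \eqref{phimap}, \eqref{phimaphalf}, \eqref{phimapreal}, \eqref{phimapreal2}) is a homeomorphism from the coordinate domain onto $\textup{Int}(\mathcal M_k(E))$ and, crucially, that the first $k$ auxiliary coordinates depend on $(m_1,\dots,m_k)$ alone.

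The starting point is to compute the pushforward of $\mathbb P_{n,E,V_{1,2}}$ under $(\varphi_n^E)^{-1}$. In each of the three cases the Jacobian factorizes as a product $\prod_j g(y_j)^{e_{n,j}}$ of per-coordinate factors whose exponents depend on $n-j$. Combined with the exponential $\exp[-n\sum V_i(y_j)]$, an algebraic identity relating $V_i$, the Jacobian exponent and $W_i$ (as defined in the respective theorem) yields the product form
\begin{equation*}
\tilde P_n(y_1,\dots,y_n)=\frac{1}{\tilde Z_n}\prod_{j=1}^n \exp\lb -n W_i(y_j)\rb\, r_{n,j}(y_j),
\end{equation*}
where $i\in\{1,2\}$ is determined by the parity of $j$ and the residual factor $r_{n,j}$ contributes only $O(1)$ to the log-density for any fixed $j$. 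In particular the auxiliary coordinates $y_1,\dots,y_n$ are independent under $\tilde P_n$, and the marginal of $(y_1^{(n)},\dots,y_k^{(n)})$ is the tensor product of the one-dimensional marginals.

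For each fixed $j\leq k$ the one-dimensional density of $y_j^{(n)}$ has the form $\propto e^{-nW_i(y)} r_{n,j}(y)$. The $C^2$-assumption on $V_i$ at the unique interior minimizer $y_i^*$ of $W_i$, together with the coercive boundary/tail behaviour of $W_i$ (which in the compact case follows from $-\log(p(1-p))\to\infty$ at $p=0,1$ and in the unbounded cases from the growth hypotheses \eqref{growth_condition} and \eqref{nbdg}), yields by Laplace's method the normalization asymptotics $\int e^{-nW_i(y)} r_{n,j}(y)\,dy\sim C_{i,j}\,n^{-1/2} e^{-nW_i(y_i^*)}$, and a standard Varadhan-type argument shows that $y_j^{(n)}$ satisfies an LDP with speed $n$ and good rate function $W_i(\cdot)-W_i(y_i^*)$. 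By the tensorization property of LDPs for independent random variables (e.g.~\cite[Exercise 4.2.7]{DZ}), $(y_1^{(n)},\dots,y_k^{(n)})$ satisfies an LDP with the rate function $J(y)=\sum_{j=1}^{\lfloor (k+1)/2\rfloor}(W_1(y_{2j-1})-W_1(y_1^*))+\sum_{j=1}^{\lfloor k/2\rfloor}(W_2(y_{2j})-W_2(y_2^*))$. Applying the contraction principle along the homeomorphism $\varphi_k^E$ transports this into the desired LDP on $\textup{Int}(\mathcal M_k(E))$ for $(m_1^{(n)},\dots,m_k^{(n)})$ with rate function $I=J\circ(\varphi_k^E)^{-1}$; since $\mathbb P_{n,E,V_{1,2}}$ puts no mass on $\partial\mathcal M_k(E)$ and $W_i$ diverges as the auxiliary coordinates approach the boundary of their domain, the extension $I\equiv\infty$ on $\partial\mathcal M_k(E)$ is automatically consistent with the LDP upper bound on closed subsets of $\mathcal M_k(E)$.

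The principal technical obstacle is the exponential control of the tail/boundary integrals in the unbounded cases $E=\R_+$ and $E=\R$: one must show that $\int_{|y|>M}e^{-nW_i(y)}r_{n,j}(y)\,dy$ becomes negligible as $M\to\infty$ at an exponential rate, uniformly in $n$, which is what gives both the good-rate-function property (compactness of the sublevel sets of $W_i$) and the exponential tightness needed for the LDP upper bound on unbounded closed sets. The hypotheses \eqref{growth_condition}, \eqref{nbdg} provide $W_i(y)/\log|y|\geq c+\varepsilon$ for some $c>0$ at infinity, which is more than sufficient; the compact case is easier but calls for a parallel argument near the endpoints of $(0,1)$. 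Beyond this, the proof reduces to carefully keeping track of the Jacobian exponents separately in the three cases, which is a routine algebraic bookkeeping exercise.
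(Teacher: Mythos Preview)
Your approach is essentially that of the paper: establish one-dimensional LDPs for the independent auxiliary coordinates $y_j^{(n)}$, tensorize via independence, and transfer to the moments by the contraction principle along the homeomorphism $\varphi_k^E$, handling the boundary by the divergence of $W_i$ there.

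One caution: you invoke the precise Laplace asymptotic $\int e^{-nW_i(y)}r_{n,j}(y)\,dy\sim C_{i,j}\,n^{-1/2}e^{-nW_i(y_i^*)}$, which requires $W_i''(y_i^*)\neq 0$. That hypothesis is \emph{not} part of Theorem~\ref{thrm_LDP} (it is only imposed in Theorems~\ref{thrm_CLT} and~\ref{thrm_MDP}); under the stated assumptions the minimizer could be degenerate and the $n^{-1/2}$ prefactor could fail. Fortunately you only need the logarithmic statement $\tfrac1n\log\int e^{-nW_i}r_{n,j}\to -W_i(y_i^*)$, which follows from continuity of $W_i$, uniqueness of the minimizer, and coercivity. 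The paper obtains this directly by elementary upper and lower bounds on the integrals over closed and open sets rather than via Laplace's method, so you should either downgrade your claim to the logarithmic asymptotic or replace the Laplace argument by the direct estimates.
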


We remark in passing that the case $E=[0,1]$, $V_1=V_2 \equiv 0$ is Theorem 2.6 in \cite{gamloz2004}.

\section{Proofs}\label{sec4}
\begin{proof}[Proof of Lemma~\ref{phi_real_even}]
 For each vector of moments $(m_1, \ldots, m_{2n})$ in the interior of the moment space $\mathcal{M}_{2n}(\mathbb{R})$, we can 
find a probability measure $\mu$ with infinite support and the first $2n$ moments given by $m_1,\dots,m_{2n}$. It is easy to see 
that the following relationship 
holds between the monic orthogonal polynomials $P_k$ corresponding to $\mu$ and their recursion coefficients $\alpha_i, \beta_i$,
    \begin{align}
        \int x^k P_k(x) \, d\mu(x) &= \beta_1 \cdots \beta_k \label{recursion1}\\
        \int x^{k + 1} P_k(x) \, d\mu(x) &= \beta_1 \cdots \beta_k (\alpha_1 + \cdots + \alpha_{k + 1}).\label{recursion2}
    \end{align}
    From this we can immediately see that $\beta_1, \ldots, \beta_k$ only depend on the moments $m_1, \ldots, m_{2k}$, while 
$\alpha_1, \ldots, \alpha_k$ only depend on the moments $m_1, \ldots, m_{2k - 1}$. On the other hand, we may determine each moment 
$m_{2k}$ from $\beta_1, \ldots, \beta_k, \alpha_1, \ldots, \alpha_k$ and each moment $m_{2k - 1}$ from 
\newline$\beta_1, \ldots, \beta_{k - 1}, \alpha_1, \ldots, \alpha_{k}$. Therefore the mapping $\varphi_{2n}^\mathbb{R}$ in 
\eqref{phimapreal}  is a well-defined bijection between $(\alpha_1, \beta_1, \ldots, \alpha_n, \beta_n)$ and $(m_1, \ldots, 
m_{2n})$. The corresponding Jacobian matrix $D\varphi_{2n}^\mathbb{R} $ is a lower triagonal matrix with  determinant  
given by
    \begin{align*}
        \det D\varphi_{2n}^\mathbb{R} = \prod \limits_{k = 1}^n \left(\frac{\partial m_{2k - 1}}{\alpha_k} \cdot \frac{\partial m_{2k}}{\beta_k}\right).
    \end{align*}
    In order to calculate these derivatives, note that since the $P_k$ are monic orthogonal polynomials we have
    \begin{align*}
        \int x^k P_{k - 1}(x) \, d\mu(x) = m_{2k - 1} + \sum \limits_{i = 0}^{2k - 2} \lambda_i m_i
    \end{align*}
    for some real numbers $\lambda_i$ (that may depend on $k$). Since $m_1, \ldots, m_{2k - 2}$ only depend on $\beta_1, \ldots, 
\beta_{k - 1}, \alpha_1, \ldots, \alpha_{k - 1}$, we get with \eqref{recursion2}
    \begin{align*}
        \frac{\partial m_{2k - 1}}{\partial \alpha_k} = \frac{\partial \int x^k P_{k - 1}(x) \, d\mu(x)}{\partial \alpha_k} = \beta_1\cdots \beta_{k - 1}.
    \end{align*}
    A similar argument using \eqref{recursion1} shows
    \begin{align*}
        \frac{\partial m_{2k}}{\partial \beta_k} = \beta_1 \cdots \beta_{k - 1},
    \end{align*}
    which leads to
    \begin{align*}
        \det D\varphi_{2n}^\mathbb{R} = \prod \limits_{k = 1}^n \prod \limits_{j = 1}^{k - 1} \beta_j^2 = \prod \limits_{j = 1}^{n - 1} \prod \limits_{k = j + 1}^n \beta_j^2 = \prod \limits_{j = 1}^{n - 1} \beta_j^{2n - 2j}.
    \end{align*}
\end{proof}

We will now prove the large deviations principles, as they play an important role in the proofs of Theorems \ref{thrm_compact_1}, \ref{thrm_R_+} and \ref{thrm_R}.
\begin{proof}[Proof of Theorem \ref{thrm_LDP}]
	For the sake of brevity we restrict ourselves to the case  $E=[a,b]$, the remaining cases can be proved analogously. To this extent, we will show 
that each $p_{2i - 1}^{(n)}$ satisfies a large deviations principle on $[0,1]$ with good rate function
	\begin{align}
	I_1(p) := W_1(p) - W_1(p_1^*),\, p\in(0,1),\quad I_1(p):=\infty,\, p\in\{0,1\},\label{I_1}
	\end{align}
	where $W_1(p) = V_1(p) - \log(p(1-p))$. Analogously, the $p_{2i}^{(n)}$ satisfy a large deviations principle on $[0,1]$ 
with good rate function $I_2(p) := W_2(p) - W_2(p_2^*)$ on the interval $(0,1)$ and 
$\infty$ 
elsewhere. The assertion then follows from the independence of the $p_i$'s and the contraction principle. Note that 
$\varphi_k^{[a,b]}$ is bijective and thus the rate function does not change when passing from canonical to ordinary moments.
	
	For the upper bound \eqref{LDP_closed}, let $U \subset [0, 1]$ be a closed set. If $U\subset \{0,1\}$, \eqref{LDP_closed} 
is trivially true by definition of $\mathbb{P}_{n,[a,b],V_{1,2}}$ and thus we may assume $U\cap(0,1)\not=\emptyset$. Then, setting $W^U:= 
\inf \limits_{x \in U} W_1 
(x) $,
	\begin{align*}
	&\limsup \limits_{n \to \infty}\frac{1}{n} \log \int_U e^{-nV_1(x)+(n-i)\log(x(1-x))}\, dx \le{} \limsup \limits_{n \to \infty} \frac{1}{n} \log \int_0^1 e^{-iV_1(x)-(n-i)W^U}\, dx  = -W^U.
	\end{align*}
	For the lower bound \eqref{LDP_open}, let $O \subset [0, 1]$ be an open set and define $W^O := \inf \limits_{x \in O} W_1(x)$. Let $\varepsilon > 0$ be arbitrary. By continuity of $W$ on the interval $(0,1)$ and openness of $O$ we know that $O \cap \{W_1 < W^O + \varepsilon\}$ is a nonempty open set. This yields
	\begin{align*}
	&\liminf \limits_{n \to \infty} \frac{1}{n} \log \int_O e^{-nV_1(x)+(n-i)\log(x(1-x))}  \, dx \\
	\ge{}& \liminf \limits_{n \to \infty} \frac{1}{n} \log \int\limits_{O \cap \{W_1 < W^O + \varepsilon\}} e^{-nV_1(x)+(n-i)\log(x(1-x))} \, dx \\
	\ge{}& \liminf \limits_{n \to \infty} \frac{1}{n} \log \int \limits_{O \cap \{W_1 < W^O + \varepsilon\}} e^{-iV_1(x)-(n-i)(W^O + \varepsilon)} \, dx = -W^O - \varepsilon.
	\end{align*}
	Now let $\varepsilon \to 0$, then the assertion finally follows from the choice $U=O = [0,1]$ which shows that the normalization constant of the density satisfies
	\begin{align*}
	\lim \limits_{n \to \infty} \frac{1}{n} \log \int_0^1 e^{-nV_1(x)+(n-i)\log(x(1-x))}\, dx = -\inf \limits_{y \in (0, 1)} W_1(y).
	\end{align*}
\end{proof}

Next, we will prove the results on laws of large numbers in Section \ref{sec2}. It follows from Theorem \ref{thrm_LDP} and the 
Borel-Cantelli lemma that in all three cases $(m_1^{(n)},\dots,m_k^{(n)})\to(m^*_1,\dots,m^*_k)$ almost surely as $n\to\infty$, 
where 
$m^*_j$ are determined by $p^*_i,z^*_i, i=1,2$ or $\a^*,\b^*$, respectively. The convergence in $L^1$ follows  for $E=[a,b]$ 
immediately by the boundedness of the moments. For unbounded $E$, it suffices to see that the $m_j^{(n)}$'s are uniformly integrable 
thanks to the exponential decay from the large deviations principle.  It remains to identify the corresponding measures to the 
moment sequences $(m_1^*,m_2^*,\dots)$. The general technique to do this is to consider the Jacobi operator associated to the 
recurrence coefficients of the orthogonal polynomials and derive an equation for the Stieltjes transform of the desired 
measure via a continued 
fraction expansion. We start with the simplest case of Theorem \ref{thrm_R}, where we explain the strategy in detail.

We will make use of the following lemma.
\begin{lemma}\label{lemma_Stieltjes_expansion}
 Let $\mu$ be a Borel probability measure on $\R$ that is determined by its moments (i.e.~the Hamburger moment problem to the moments of $\mu$ is determinate). Let $\a_1,\b_1,\a_2,\b_2\dots$ denote the 
recurrence 
coefficients of the monic orthogonal polynomials to the measure $\mu$ (see \eqref{recurrence}). If $\mu$ is supported on $N$ 
points, we set $\b_j:=0$ for $j\geq N$. Then the Stieltjes 
transform of $\mu$, 
\begin{align*}
\Phi(z):=\int\frac{d\mu(x)}{z-x},
\end{align*} 
 defined for $z\in\C^+:=\{z\in\C:\Im z>0\}$, has the continued fraction expansion
\begin{align*}
\Phi(z)=\ &\polter{1}{z-\a_1}-\polter{\b_1}{z-\a_2}-\polter{\b_2}{z-\a_3}-\dots.
\end{align*}
Here the convergents 
$$\polter{1}{z-\a_1}-\polter{\b_1}{z-\a_2}-\dots-\polter{\b_l}{z-\a_{l+1}}$$
 converge 
locally uniformly in $\C^+$ as $l\to\infty$. 
\end{lemma}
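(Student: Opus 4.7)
The plan is to recognize the convergents of the continued fraction as Stieltjes transforms of Gauss quadrature measures associated to $\mu$, and then to pass to the limit using determinacy together with a normal-families argument.

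First I would introduce the \emph{associated polynomials of the second kind}
\[
Q_n(z):=\int \frac{P_n(z)-P_n(x)}{z-x}\,d\mu(x),\qquad n\geq 0,
\]
which are polynomials in $z$ of degree $n-1$ (with $Q_0=0$, $Q_1=1$). A direct substitution into the three-term recurrence \eqref{recurrence} satisfied by $(P_n)$, using $\int P_k\,d\mu = 0$ for $k\geq 1$, shows that $(Q_n)$ satisfies exactly the same three-term recurrence as $(P_n)$, only with the initial data shifted by one index. From this one obtains by a standard induction on the matrix form of the recurrence that the $l$-th convergent of the continued fraction
\[
C_l(z):=\polter{1}{z-\a_1}-\polter{\b_1}{z-\a_2}-\cdots-\polter{\b_{l-1}}{z-\a_l}
\]
equals the rational function $Q_l(z)/P_l(z)$; this is the usual Wallis-type identification.

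Next I would identify $Q_l/P_l$ with the Stieltjes transform of the $l$-th Gauss quadrature measure $\mu_l$, which is the discrete probability measure supported on the $l$ (real, simple) zeros of $P_l$ with the Christoffel weights. Partial fraction decomposition of $Q_l/P_l$ gives
\[
\frac{Q_l(z)}{P_l(z)}=\int\frac{d\mu_l(x)}{z-x}=:\Phi_l(z),\qquad z\in\C^+,
\]
and by construction $\mu_l$ reproduces the first $2l-1$ moments of $\mu$.

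The main step is then to show $\Phi_l\to\Phi$ locally uniformly on $\C^+$. Since the moments of $\mu_l$ converge to those of $\mu$, the sequence $(\mu_l)$ is tight and every vague limit point has the same moments as $\mu$; by determinacy of the Hamburger moment problem, $\mu_l\Rightarrow\mu$ weakly. For each fixed $z\in\C^+$ the function $x\mapsto (z-x)^{-1}$ is bounded and continuous on $\R$, so $\Phi_l(z)\to\Phi(z)$ pointwise. Each $\Phi_l$ is holomorphic on $\C^+$ with $|\Phi_l(z)|\leq 1/\Im z$, hence $\{\Phi_l\}$ is a normal family on $\C^+$; Vitali's theorem upgrades pointwise to locally uniform convergence. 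The finite-support case $N<\infty$ is automatic: then $P_N$ vanishes on $\operatorname{supp}\mu$, the recurrence terminates with the convention $\b_N=0$, and $\mu_N=\mu$ exactly, so the finite continued fraction already equals $\Phi$.

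The only delicate point is the tightness/weak-convergence step, which really is the place where the determinacy hypothesis is used; everything else is algebraic manipulation of the recurrence plus the standard Montel/Vitali argument.
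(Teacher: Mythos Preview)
Your proof is correct and follows essentially the same route as the paper: identify the $l$-th convergent as $Q_l/P_l$ via the shared three-term recurrence, recognize this as the Stieltjes transform of a discrete measure $\mu_l$ sharing the first $2l-1$ moments with $\mu$, use determinacy to get weak convergence $\mu_l\Rightarrow\mu$, and upgrade pointwise convergence of Stieltjes transforms to locally uniform via a normal-families argument. The paper phrases the discrete approximant $\mu_N$ as ``the unique $N$-point measure with the given recursion coefficients'' rather than as the Gauss quadrature measure, and first treats the finite-support case separately by showing $P_N$ vanishes exactly on $\operatorname{supp}\mu$; but these are the same objects and the same argument. If anything, your explicit mention of tightness in the moment-convergence-to-weak-convergence step is a touch more careful than the paper's one-line invocation of determinacy.
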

Although the connection between continued fractions, Stieltjes transforms and orthogonal polynomials is classical and this result should be well-known, we did not 
manage to find this lemma in the literature. For measures with compact support, it is called Markov's theorem. We will give an 
elementary derivation.
\begin{proof}[Proof of Lemma \ref{lemma_Stieltjes_expansion}]

 Let $\mu$ be a measure whose support consists of precisely $N$ distinct points. Then the  monic orthogonal 
polynomials $P_1, \ldots, P_{N}$  up to order $N$ with respect to $\mu$ and the corresponding 
recursion coefficients $\alpha_1,\b_1,\a_2,\b_2,\ldots,\b_{N-1},\a_N$ are well-defined. 
Moreover, if $\mu$ has masses $\omega_1, \ldots , \omega_N$ at the 
points $t_1,\ldots , t_N$ and $m_j$ denotes the $j$-th moment of $\mu$, the monic orthogonal polynomial $P_{N}$ is proportional to the polynomial 
\begin{align*}
\tilde P_{N} (t) &= \det \left(  \begin{matrix}  
1 & m_1 & \ldots & m_{N-1} &1 \\
 m_1 & m_2 & \ldots & m_{N} & t \\
\vdots  & \vdots   & \ddots & \vdots & \vdots  \\
 m_N & m_{N+1} & \ldots & m_{2N-1} & t^N 
 \end{matrix}
 \right) \\
 &=
 \sum_{i_0=1}^N \ldots  \sum_{i_{N-1} = 1}^N \omega_{i_0}  \ldots \omega_{i_{N-1}} t_{i_1}^1 t_{i_2}^2\ldots  t_{i_{N-1}}^{N-1} 
 \det \left(  \begin{matrix}  
1 & 1 & \ldots & 1 &1 \\
t_{i_0}  & t_{i_1} & \ldots & t_{i_{N-1}} & t \\
\vdots  & \vdots   & \ddots & \vdots & \vdots  \\
 t_{i_0}^N  & t_{i_1} ^N& \ldots & t_{i_{N-1}}^N & t^N 
 \end{matrix}
 \right)~.
 \end{align*} 
 Now the determinant in the last line vanishes whenever two indices $i_j$ and $i_k$ coincide. If all indices are different, the determinant is equal (up to a sign) to the polynomial 
 $\ell (t) = \prod_{i=1}^N (t-t_i)$. 
 Consequently, the polynomials $\tilde P_{N} $ and $P_{N} $ are also proportional 
 to  $\ell (t) $ and  therefore vanish precisely at the the support points $t_1,\ldots t_N$ of the measure $\mu$.
 
We now define for $z \in \mathbb{C}^+$ the continued fraction
    \begin{align*}
        f_j(z) := \polter{1}{z - \alpha_1} - \polter{\beta_1}{z - \alpha_2} - \polter{\beta_2}{z - \alpha_3} - \dots - 
\polter{\beta_{j - 1}}{z - \alpha_j}, \quad j=1,\dots,N.
    \end{align*}
   Writing $f_j(z)$ as a single fraction $\frac{A_{j}(z)}{B_{j}(z)}$, we see that $A_j(z)$ and $B_j(z), j=1,\dots,m$ satisfy the recursions 
 $A_0(z) 
:= 0, B_0(z) :=1$, $ A_1(z) := 1, B_1(z) := z - \alpha_1$ and 
    \begin{align*}
        A_j(z) &= (z - \alpha_j) A_{j - 1}(z) - \beta_{j - 1} A_{j - 2}(z), \\
        B_j(z) &= (z - \alpha_j) B_{j - 1}(z) - \beta_{j - 1} B_{j - 2}(z)
    \end{align*}
    for $2 \le j \le N$. 
    Clearly, $B_j$ is a polynomial in $z$ of degree $j$ with leading coefficient $1$ 
    and as it satisfies the same recursion as the 
orthogonal polynomials $P_j$, we conclude $B_j = P_j$ for $0 \le j \le N$. Furthermore, note that the sequence of functions
    \begin{align*}
        Q_j(z) := \int \frac{P_{j}(z) - P_{j}(t)}{z-t} \, d\mu(t)
    \end{align*}
    satisfies the same recursion as $A_j$, from which we can conclude $Q_j = A_j$ for $0 \le j \le 
N$. As the  roots of $P_{N}$ are precisely 
the support points of the measure $\mu$ we obtain 
    \begin{align*}
        f_N(z) = \frac{A_N(z)}{B_N(z)} = \frac{1}{P_N(z)} \int \frac{P_N(z)}{z - t} \, d\mu(t) = \int \frac{1}{z - t} \, d\mu(t), 
    \end{align*}
  which  concludes the proof for a measure $\mu$ with  finite support. 
		
		If $\mu$ has infinite support, all recursion coefficients 
$\beta_j$ are strictly positive. Let $N$ be an arbitrary natural number.  There is a unique measure $\mu_N$ supported on $N$ points such that
the corresponding monic
orthogonal polynomials have the recursion 
coefficients $\alpha_1, \beta_1,\dots,\b_{N-1},\a_N$. By the arguments above, the Stieltjes transform of $\mu_N$ has the form
    \begin{align*}
        f_N(z) = \polter{1}{z - \alpha_1} - \polter{\beta_1}{z - \alpha_2} - \polter{\beta_2}{z - \alpha_3} - \dots - 
\polter{\beta_{N - 1}}{z - \alpha_N}.
    \end{align*}
    Since the recursion coefficients up to order $N$ determine the moments of $\mu_N$ up to order $2N-1$,  we know that 
$m_j(\mu_N) = m_j(\mu)$ for $1 \le j \le 2N - 1$. Letting $N\to\infty$ thus shows $\lim_{N\to\infty} m_j(\mu_N) =m_j(\mu)$ 
for all $j$. Since the measure $\mu$ is uniquely determined by its moments, this implies the weak convergence $\mu_N \xrightarrow{w} 
\mu$. For any fixed $z \in \mathbb{C}^+$, the function $t \mapsto \frac{1}{z - t}$ is a bounded continuous function. Therefore the Stieltjes transform of $\mu_N$ converges to the Stieltjes transform of $\mu$, i.e.
    \begin{align*}
        \int \frac{1}{z - t} \, d\mu(t) = \lim \limits_{N \to \infty} \int \frac{1}{z - t} \, d\mu_N(t) = \polter{1}{z - \alpha_1} - 
\polter{\beta_1}{z - \alpha_2} - \polter{\beta_2}{z - \alpha_3} - \dots.
    \end{align*}
As $z\mapsto\frac1{z-t}$ is analytic in $\C^+$ and uniformly bounded away from the real line, $f_N$ is analytic in $\C^+$ and for any compact $K\subset \C^+$ we have $\sup_{N,z\in K}\lv f_N(z)\rv\leq M$ for some $M>0$. It follows by Montel's theorem that the convergence is uniform on $K$.

\end{proof}

\begin{proof}[Proof of Theorem \ref{thrm_R}]

	Let $\mu_{SC,\a^*,\b^*}$ be the measure for which the recurrence coefficients of the associated monic orthogonal polynomials are $\a_j=\a^*$ and $\b_j=\b^*$ for all $j$. From \eqref{phimapreal} we know that $\mu_{SC,\a^*,\b^*}$ has finite moments. By Carleman's criterion (in terms of recurrence coefficients, see \cite[p. 59]{ShohatTamarkin}, the Hamburger moment problem for the moments of $\mu_{SC,\a^*,\b^*}$ is determinate, if
\begin{align}
\sum_{j=1}^\infty \frac{1}{\sqrt\b_j}=\infty,\label{Carleman}
\end{align}
	which is clearly the case here.
	Thus by Lemma \ref{lemma_Stieltjes_expansion} the Stieltjes transform
	\begin{align*}
	\Phi_{SC,\a^*,\b^*}(z):=\int\frac{d\mu_{SC,\a^*,\b^*}(x)}{z-x},
	\end{align*}
	 has the continued fraction expansion
	\begin{align}
	\Phi_{SC,\a^*,\b^*}(z)=\ &\polter{1}{z-\a^*}-\polter{\b^*}{z-\a^*}-\dots=\frac{1}{z-\a^*-\b^*\Phi_{SC,\a^*,\b^*}(z)},\label{cont_fract_sc}
	\end{align}
	where the dots $\dots$ in \eqref{cont_fract_sc} mean a continued repetition of the last fraction before the dots. Solving algebraically for $\Phi_{{SC,\a^*,\b^*}}(z)$ yields the two solutions
	\begin{align*}
	\frac{z-\a^*\mp\sqrt{(z-\a^*)^2-4{\b^*}}}{2\b^*}.
	\end{align*}
	Since any Stieltjes transform maps the upper half plane to the lower half plane, we get
	\begin{align}
		\Phi_{SC,\a^*,\b^*}(z)=\frac{z-\a^*-\sqrt{(z-\a^*)^2-4{\b^*}}}{2\b^*},\label{Stieltjes_sc}
	\end{align}
	where we define $\sqrt{(z-\a^*)^2-4{\b^*}}$ for $z\in\C^+$ as the branch with positive imaginary part. Note that 
$\sqrt{(z-\alpha)^2 - 4\beta}$ admits a continuous extension from $\C^+$ to 
$\mathbb{R}$ via
    \begin{align*}
        \lim_{y\to0+}\sqrt{(x+iy-\alpha)^2 - 4\beta} = 
        \begin{cases}
						-\sqrt{(x-\alpha)^2 - 4\beta} &, x < \alpha - 2\sqrt{\beta}\\
            i \sqrt{4\beta - (x-\alpha)^2} &, x\in [\alpha - 2\sqrt{\beta}, \alpha + 2\sqrt{\beta}] \\
            \sqrt{(x-\alpha)^2 - 4\beta} &, x >\alpha + 2\sqrt{\beta}
        \end{cases}.
    \end{align*}
    Thus $\Phi_{SC,\a^*,\b^*}$ has a continuous extension from the upper half 
plane to the real line and $\mu_{\a^*,\b^*}$ has a density on $\R$ which is given by the Stieltjes inversion formula (see 
e.g.~\cite[Remark 2.20]{nicspe2006}) 
	\begin{align}
	\frac{\mu_{\a^*,\b^*}(dx)}{dx}&=-\frac1\pi\lim_{y\to0+}\Im\Phi_{SC,\a^*,\b^*}(x+iy)\label{Stieltjes_inversion}\\
	&=\frac{1}{2\pi\b^*}\sqrt{4\b^*-(x-\a^*)^2}1_{[\a^*-2\sqrt{\b^*},\a^*+2\sqrt{\b^*}]}(x).\notag
	\end{align}
	It is well-known that (see \cite[Corollary 2.14]{nicspe2006}) the $j$-th moment of the semicircle distribution 
$\mu_{SC,0,1}$ is $\frac{1}{j+1}\binom{2j}{j}$, \eqref{moments_SC} follows by a simple computation.
\end{proof}

\begin{proof}[Proof of Theorem \ref{thrm_compact_1}]
	Let $\mu_{p^*_1,p^*_2}$ be the probability measure determined by having canonical odd moments $p^*_1$ and canonical even 
moments $p^*_2$. For a probability measure on $[a,b]$ with canonical moments $p_1,p_2, p_3, \ldots$ the recurrence coefficients of 
its monic orthogonal polynomials are given by (cf. \cite[Corollary 2.3.4, eq.~(1.4.6)]{dettstud1997})
	\begin{align*}
	&\a_j=a+(b-a)(q_{2j-3}p_{2j-2}+q_{2j-2}p_{2j-1}),\\
	&\b_j=(b-a)^2q_{2j-2}p_{2j-1}q_{2j-1}p_{2j}, j=1,\dots.
	\end{align*}
	Here we set $p_{-1}=p_0=0$ and as usual $q_j:=1-p_j$. In our case  $\a_1=a+(b-a)p^*_1$, $\b_1=(b-a)^2p^*_1q^*_1p^*_2$,  and for $j\geq 2$ we have $\a_j=a+(b-a)(p_1^*q_2^*+p_2^*q_1^*)$, $\b_j=(b-a)^2p^*_1q^*_1p^*_2q^*_2$. Since $[a,b]$ is compact, the moment problem is determinate and hence Lemma \ref{lemma_Stieltjes_expansion} yields that the Stieltjes transform 
	\begin{align*}
	\Phi_{p^*_1,p^*_2}(z):=\int\frac{d\mu_{p^*_1,p^*_2}(x)}{z-x}
	\end{align*}
	 has the continued fraction expansion
	\begin{align*}
	\Phi_{p^*_1,p^*_2}(z)={}&\ \polter{1}{z-a-(b-a)p_1^*}-\polter{(b-a)^2{p_1^*}q^*_1p_2^*}{z-a-(b-a)(p_1^*q_2^*+p_2^*q_1^*)}\\
	&-\polter{(b-a)^2p_1^*q_1^*p_2^*q_2^*}{z-a-(b-a)(p_1^*q_2^*+p_2^*q_1^*)}-\dots,\\
	={}&\polter{1}{z-a-(b-a)p_1^*}-(b-a)^2{p_1^*}q^*_1p_2^*\Phi_{SC,\a,\b}(z),
	\end{align*}
	where $\Phi_{SC,\a,\b}$ is from \eqref{cont_fract_sc} with $\a:=a+(b-a)(p_1^*q_2^*+p_2^*q_1^*)$, $\b:=(b-a)^2p^*_1q^*_1p^*_2q^*_2$. Thus by \eqref{Stieltjes_sc}
	\begin{align*}
	\Phi_{p^*_1,p^*_2}(z)&=\frac{2q_2^*}{2q_2^*(z-a-(b-a)p_1^*)-(z-\a-\sqrt{(z-\a)^2-4\b})}\\
	&=\frac{(1-2p_2^*)z+\a-2{q_2^*}(a+(b-a)p_1^*)-\sqrt{(z-\a)^2-4\b}}{2p_2^*(z-a)(b-z)}.
	\end{align*}
	As atoms of $\mu_{p^*_1,p^*_2}$ are simple poles of the Stieltjes transform, atoms  can only 
be at $a$ or $b$.  They can be identified using the formula 
\begin{align}
\mu_{p^*_1,p^*_2}(\{x\})=-\lim_{y\to0+}y\Im\Phi_{p^*_1,p^*_2}(x+iy).\label{Stieltjes_atom}
\end{align}
Using this, we get after some algebra for $x=a$
\begin{align*}
\mu_{p^*_1,p^*_2}(\{a\})=\frac{p_2^*-p_1^*+\lvert p_2^*-p_1^*\rvert}{2p_2^*}=\begin{cases}
                                                                                    0,&\quad\text{if }p_1^*\geq p_2^*\\
										    1-\frac{p_1^*}{p_2^*},&\quad \text{if 
}p_1^*<p_2^*
                                                                                   \end{cases}
.
\end{align*}
For $x=b$, we get similarly
\begin{align*}
\mu_{p^*_1,p^*_2}(\{b\})=\frac{p_1^*+p_2^*-1+\lvert 1-p_1^*-p_2^*\rvert}{2p_2^*}=\begin{cases}
                                                                                    0,&\quad\text{if }p_1^*+p_2^*\leq1,\\
										    \frac{p_1^*+p_2^*-1}{p_2^*},&\quad \text{if 
}p_1^*+p_2^*>1.
                                                                                   \end{cases}
\end{align*}

$\Phi_{p^*_1,p^*_2}(z)$ has a continuous extension to $\R\setminus\{a,b\}$. Thus the measure is absolutely continuous on 
$\R\setminus\{a,b\}$ and the density of the absolutely continuous part $\mu_{p_1^*,p_2^*}^{ac}$  can 
be computed using \eqref{Stieltjes_inversion} as
\begin{align*}
\frac{\mu_{p^*_1,p^*_2}^{ac}(dx)}{dx}=\frac{\sqrt{4\b-(\a-x)^2}}{2\pi p_2^*(x-a)(b-x)}
\end{align*}
for $x\in[\a-2\sqrt{\b},\a+2\sqrt\b]$, and $0$ elsewhere. This proves (1), since $l_\pm=\a\pm2\sqrt\b$.

For (2) we use the well-known fact from potential theory (cf.~e.g.~\cite[Theorem I.3.3]{SaffTotik})  that $\mu$ is the minimizing 
measure of \eqref{eq_measure} if and only if it satisfies the Euler-Lagrange equations
\begin{align}\label{Euler-Lagrange}
Q(t)-2\int \log\lv t-s\rv d\mu(s)\begin{cases}
=l&,\quad\text{if }t\in\textup{supp}(\mu),\\
\geq l&,\quad\text{if }t\notin\textup{supp}(\mu),
\end{cases}
\end{align}
where $l$ is a real constant. Differentiating, we get for $t\in\textup{supp}(\mu)$
\begin{align}\label{Q'}
Q'(t)=2H_\mu(t),
\end{align}
where 
\begin{align*}
H_{\mu}(t):=\int\frac{d\mu(s)}{t-s}
\end{align*}
is the Hilbert transform of $\mu$. Note that the integral is understood as a principal value integral. 
The Hilbert transform of an absolutely continuous measure can be obtained from its Stieltjes transform $\Phi_\mu$ via (see 
e.g.~\cite[p.~94]{hiaipetz2000})
\begin{align*}
H_{\mu}(t)&=\lim_{y\to0+}\Re\Phi_{\mu}(t+ iy).
\end{align*}
In our case this gives together with \eqref{Q'}
\begin{align*}
Q'(t)&=\frac{(1-2p_2^*)t+\a-2{q_2^*}(a+(b-a)p_1^*)}{p_2^*(t-a)(b-t)}=-\frac{p_1^*-p_2^*}{p_2^*(t-a)}+\frac{
1-p_1^*-p_2^*}{p_2^*(b-t)}.
\end{align*}
Integration yields
\begin{align}
Q(t)={}&-\lb\frac{p_1^*}{p_2^*}-1\rb\log(t-a)-\lb\frac{1-p_1^*-p_2^*}{p_2^*}\rb\log(b-t)\label{field}
\end{align}
on the support. The integration 
constant does not matter here and thus is set to $0$ for simplicity. We will consider $Q$ defined by \eqref{field} as function 
$Q:[a,b]\rightarrow \R\cup\{+\infty\}$. By construction, $Q$ satisfies the equation of 
\eqref{Euler-Lagrange} on the support of $\mu_{p^*_1,p^*_2}$. For the inequality in \eqref{Euler-Lagrange}, we compute the Hilbert 
transform $H_{\mu_{p_1^*,p_2^*}}$ outside of the support of $\mu_{p_1^*,p_2^*}$ as
\begin{align*}
H_{\mu_{p_1^*,p_2^*}}(t)=
    \begin{cases}
        \frac{Q'(t)}2+\frac{\sqrt{(t-\a)^2-4\b}}{2p_2^*(t-a)(b-t)}\geq \frac{Q'(t)}2 &, t \le \alpha - 2\sqrt{\beta}, \\
        \frac{Q'(t)}2-\frac{\sqrt{(t-\a)^2-4\b}}{2p_2^*(t-a)(b-t)}\leq \frac{Q'(t)}2 &, t \ge \alpha + 2\sqrt{\beta}.
    \end{cases}
\end{align*}
 Consequently, $Q(t) - 2\int \log|t-s| \, d\mu_{p_1^*, p_2^*}(s)$ is nonincreasing on $[a, 
l_-)$, constant on $[l_-, l_+]$ and nondecreasing on $(l_+, b]$. This implies the inequality in \eqref{Euler-Lagrange} and thus 
proves (2).
\end{proof}

\begin{proof}[Proof of Theorem \ref{thrm_R_+}]
 It is not difficult to see that the recurrence coefficients for the orthogonal polynomials to a probability measure $\mu$ on $\R_+$ with canonical moments $z_1,z_2,\dots$ are given by
\begin{align*}
&\a_j=z_{2j-2}+z_{2j-1},\\
	&\b_j=z_{2j-1}z_{2j}, j\geq1
\end{align*}
with the convention $z_0:=0$. 

Let $\mu_{MP,z_1^*,z_2^*}$ be the probability measure on $\R_+$ with canonical moments $z_{2j-1}=z_1^*$ and $z_{2j}=z_2^*$, $j=1,\dots$. Then the recursion coefficients of the corresponding orthogonal polynomials are $\a_1=z_1^*,\a_j=z_1^*+z_2^*,j\geq2$ and $\b_j=z_1^*z_2^*,j\geq1$. The Stieltjes transform of $\mu_{MP,z_1^*,z_2^*}$ will be denoted by $\Phi_{MP,z_1^*,z_2^*}$. By \eqref{Carleman}, the moment problem is determinate and thus $\Phi_{MP,z_1^*,z_2^*}$ admits the continued fraction expansion
\begin{align*}
\Phi_{MP,z_1^*,z_2^*}(z)&=\ \polter{1}{z-z_1^*}-\polter{z^*_1z_2^*}{z-(z_1^*+z_2^*)} - \quad \ldots  \qquad 
= \quad \frac{1}{z-z_1^*-z^*_1z_2^*\Phi_{SC,\a,\b}(z)},
\end{align*}
where $\Phi_{SC,\a,\b}(z)$ is from \eqref{cont_fract_sc} with $\a:=(z_1^*+z_2^*)$ and $\b=z_1^*z_2^*$. Using \eqref{Stieltjes_sc}, this gives
\begin{align*}
\Phi_{MP,z_1^*,z_2^*}(z)&=\frac{2\b}{2\b(z-z_1^*)-z_1^*z_2^*(z-\a-\sqrt{(z-\a)^2-4\b})}\\
&=\frac{z-z_1^*+z_2^*-\sqrt{(z-\a)^2-4\b}}{2z_2^*z}.
\end{align*}
Clearly, $\mu_{MP,z_1^*,z_2^*}$ can have an atom only at 0. A computation using \eqref{Stieltjes_atom} gives
\begin{align*}
\mu_{MP,z^*_1,z^*_2}(\{0\})=\frac{z_2^*-z_1^*-\lvert z_1^*-z_2^*\rvert}{2z_2^*}=\begin{cases}
                                                                                    0,&\quad\text{if }z_2^*\geq z_1^*,\\
										    1-\frac{z_1^*}{z_2^*},&\quad \text{if 
}z_2^*<z_1^*.
                                                                                   \end{cases}
\end{align*}
The density of the absolutely continuous part can again be determined using \eqref{Stieltjes_inversion} as
\begin{align*}
\frac{\mu_{MP,z^*_1,z^*_2}(dx)}{dx}=\frac{\sqrt{4\b-(\a-x)^2}}{2\pi z_2^*x}
\end{align*}
for $x\in[\a-2\sqrt{\b},\a+2\sqrt\b],\,x\not=0$, and $0$ elsewhere. Now the statement of the theorem follows noting 
$l_\pm=\a\pm2\sqrt\b$.
\end{proof}

\begin{proof}[Proof of Theorem \ref{thrm_CLT}] We will only prove the case $E=\R_+$, as the remaining parts are shown by  similar 
arguments.
Consider a moment vector under the distribution  $\mathbb{P}_{n,\R_+,V_{1,2}}$ defined by the density \eqref{def_R_+}.
    We will show that the  canonical moments satisfy
    \begin{align*}
        \sqrt{n}(z_{2i - 1}^{(n)} - z_1^*) &\xrightarrow{d} \mathcal{N}(0, W_1''(z_1^*)^{-1}) \\
        \sqrt{n}(z_{2i}^{(n)} - z_2^*) &\xrightarrow{d} \mathcal{N}(0, W_2''(z_2^*)^{-1})
    \end{align*}
    as $n \to \infty$. 
    The assertion of the theorem then follows from the independence of the $z_i^{(n)}$ and an application of the delta-method.
    
    By Scheff\'{e}'s Lemma, weak convergence of a sequence of measures can be proved by showing pointwise convergence of the corresponding densities. The density of $\sqrt{n}(z_{2i - 1}^{(n)} - z_1^*)$ is given by
    \begin{align*}
    f_n(x)  :=  \frac{g_n(x)}{c_n}, 
    \end{align*}
    where 
      \begin{align*}
     g_n(x)  :=  \exp\big\{{-}n(W_1(z_1^* + \tfrac{x}{\sqrt{n}}) - W_1(z_1^*))\big\} (z_1^* + \tfrac{x}{\sqrt{n}})^{-(2i - 1)} 1_{\left\{z_1^* + \tfrac{x}{\sqrt{n}} > 0\right\}}
    \end{align*}
and    $c_n$ is an appropriate normalization constant.
       By Taylor's theorem we obtain that
    \begin{align*}
        W_1(z_1^* + x/\sqrt{n}) = W_1(z_1^*) + \frac{x^2}{2n} W_1''(z_1^* + \lambda x/\sqrt{n})
    \end{align*}
    holds for some $\lambda \in [0, 1]$. From this we can easily conclude
    \begin{align*}
        g_n(x)  \xrightarrow{n \to \infty} \exp(-W_1''(z_1^*) x^2/2) (z_1^*)^{-(2i - 1)},
    \end{align*}
    and it remains to prove the convergence of the normalization constant. By assumption \newline$W_1''(z_1^*)\not=0$ and since $z_1^*$ is a minimizer of $W_1$, we have $W_1''(z_1^*)>0$. Hence we may choose $0 < \varepsilon < z_1^*$ so small that the inequality $W_1''(x) > W_1''(z_1^*)/2$ is satisfied for all $x$ with $|x - z_1^*| < \varepsilon$. This yields
    \begin{align*}
        c_n &= \int\limits_{-z_1^*\sqrt{n}}^{\infty} \exp\big\{{-}n(W_1(z_1^* + x/\sqrt{n}) - W_1(z_1^*))\big\} (z_1^* + x/\sqrt{n})^{-(2i - 1)} \, dx \\
        &= \int \limits_{-\varepsilon\sqrt{n}}^{\varepsilon\sqrt{n}} \exp \big\{{-}n(W_1(z_1^* + x/\sqrt{n}) - W_1(z_1^*))\big\} (z_1^* + x/\sqrt{n})^{-(2i - 1)}\, dx + o(1) \\
        &\xrightarrow{n \to \infty} \int \exp\big\{{-}W_1''(z_1^*) x^2/2\big\}(z_1^*)^{-(2i - 1)} \, dx = \sqrt{\frac{2\pi}{W_1''(z_1^*)}} (z_1^*)^{-(2i - 1)}.
    \end{align*}
    Here we have used the dominated convergence theorem with dominating function 
    \begin{align*}
        g(x) := \exp\big\{{-}W_1''(z_1^*) x^2/4\big\} (z_1^* - \varepsilon)^{-(2i - 1)}.
    \end{align*}
    The $o(1)$ term stems from the fact that outside of $(-\varepsilon \sqrt{n}, \varepsilon \sqrt{n})$ the function $W_1(z_1^* + x/\sqrt{n}) - W(z_1^*)$ is bounded from below by some positive constant $K > 0$. The remaining integral can then be bounded by
    \begin{align*}
        \sqrt{n} \exp(-(n - (2i - 1))K) \int_0^\infty \exp\big\{-(2i - 1)(V_1(x) - V_1(z_1^*) + \log(z_1^*(1 - z_1^*)))\big\} \, dx = o(1).
    \end{align*}
    Hence the density $f_n$ converges pointwise  to a centered normal distribution with variance $1/ W_1''(z_1^*)$, which completes 
the proof of the 
    first part of the theorem.
    
    It remains to determine the entries of $D\varphi_k^{\mathbb{R}_+}$ in the case $z_1^* = z_2^*$. In order to do this, we will follow the arguments in \cite{detnag2012}. Therein, a double sequence $g_{i, j}$ is defined by
    \begin{align*}
        g_{i, j} := 
        \begin{cases}
            1 &, \text{ if } i = 0, \\
            0 &, \text{ if } i \ne 0, i > j, \\
            g_{i, j - 1} + z_{j - i + 1} g_{i - 1, j} &, \text{ if } i \ne 0, i \le j.
        \end{cases}
    \end{align*}
    An induction argument over the sum $i + j$ shows that $g_{i, j}$ is a homogeneous polynomial of degree $i$ in $z_1, z_2, 
\ldots$. Consequently, the partial derivative $\frac{dg_{i, j}}{d z_k}$ is a homogeneous polynomial of degree $i - 1$.
    Following the arguments of \cite{detnag2012} we have $g_{k, k} = m_k$ with
    \begin{align*}
        \frac{d m_i}{d z_r}(1, 1, \ldots) = \binom{2i}{i - r} - \binom{2i}{i - r - 1}
    \end{align*}
    and thus	
    \begin{align*}
        \frac{d m_i}{d z_r}(z_1^*, z_1^*, \ldots) = (z_1^*)^{i - 1} \frac{d m_i}{d z_r}(1, 1, \ldots) = (z_1^*)^{i - 1} 
\left(\binom{2i}{i - r} - \binom{2i}{i - r - 1}\right).
    \end{align*}
\end{proof}

\begin{proof}[Proof of Theorem \ref{thrm_MDP}]
	We will only prove the case $E=[a,b]$, the remaining cases are treated  similarly.
	We will first show that each $a_n(p_{2j - 1}^{(n)} - p_1^*)$ satisfies a large deviations principle with good rate function $J(x) := W_1''(p_1^*) x^2/2$ and speed $b_n$, where $(a_n)_n$ and $(b_n)_n$ are chosen as in Theorem \ref{thrm_MDP}. In order to see this, let $U \subset \mathbb{R}$ be an arbitrary closed set and $0 < \varepsilon < 1$ sufficiently small so that $W_1''(y) \ge M > 0$ holds for all $y \in (p^*-\varepsilon, p^*+\varepsilon)$ and some constant $M > 0$. Set $\gamma := \inf \limits_{x \in U} |x|$,  $R(p) := (p(1-p))^{-(2i - 1)}$ and let $I_1$ be the function \eqref{I_1}. Note that $I_1\geq0$ with unique zero $p_1^*$ and $I_1''=W_1''$. For \eqref{LDP_closed} we show first 
	\begin{align*}
        \limsup \limits_{n \to \infty} \frac{1}{b_n} \log \int_U e^{-nI_1(x/a_n + p_1^*)} R(x/a_n + p_1^*) \, dx  \le -W_1''(p^*)\frac{\gamma^2}{2}.
	\end{align*}
	The case $\gamma = \infty$ is trivial, since then $U=\emptyset$, so we may assume $\gamma < \infty$. We will first consider $U\cap \{|x| \geq \varepsilon a_n\}$. We get
	\begin{align*}
        &\limsup \limits_{n \to \infty} \frac{1}{b_n} \log \int_{U} 1_{\{|x| \geq \varepsilon a_n\}}e^{-nI_1(x/a_n + p_1^*)} R(x/a_n + p_1^*) \, dx \\
        \le{}& \limsup \limits_{n \to \infty} \frac{1}{b_n} \log \int_\R 1_{\{|x| \geq \varepsilon a_n\}} e^{-(2i - 1)V_1(x/a_n + p_1^*)} \exp\Big({-}(n-(2i - 1)) \inf \limits_{|y - p_1^*| \geq \varepsilon} I_1(y)\Big) \, dx \\
        \le{}& \limsup \limits_{n \to \infty} \frac{1}{b_n} \log \int_\R a_n e^{-(2i - 1)V_1(t)} \exp\Big({-}(n-(2i - 1)) \inf \limits_{|y - p_1^*| \geq \varepsilon} I_1(y)\Big) \, dt \\
        \le{}& \limsup \limits_{n \to \infty} {a_n^2\Big(\log a_n - (n - (2i - 1)) \inf \limits_{|y - p_1^*| \geq \varepsilon} I_1(y)\Big)}/{n} = -\infty.
	\end{align*}
	For the  set $U\cap \{|x| < \varepsilon a_n\}$, note that by Taylor's theorem
	\begin{align*}
        &\int_U 1_{\{ |x| < \varepsilon a_n\}} e^{-nI_1(x/a_n + p_1^*)} R(x/a_n + p_1^*) \, dx \\
        \le{}&\sup \limits_{|y - p_1^*| < \varepsilon} R(y) \int_U 1_{\{ |x| < \varepsilon a_n\}} \exp\big({-}nx^2/(2a_n^2) \inf \limits_{|z - p_1^*| < \varepsilon}W_1''(z)\big) \, dx \\
        \le{}&\sup \limits_{|y - p_1^*| < \varepsilon} R(y) \int_\R \exp\Big({-}\big((1 - \varepsilon) n\gamma^2/(2a_n^2) + \varepsilon n x^2/(2a_n)\big)\inf \limits_{|z - p_1^*| < \varepsilon}W_1''(z)\Big) \, dx \\
        \le{}& \sup \limits_{|y - p_1^*| < \varepsilon} R(y)\exp\big({-}(1 - \varepsilon) 
        b_n\gamma^2/2 \inf \limits_{|z - p_1^*| < \varepsilon}W_1''(z)\big) \sqrt{{2\pi}\Big / \Big ( \varepsilon{b_n \inf \limits_{|z - p_1^*| < \varepsilon}W_1''(z)} \Big) }.
	\end{align*}
	Consequently,
	\begin{align*}
        &\limsup \limits_{n \to \infty} \frac{1}{b_n} \log \int_U 1_{\{|x| < \varepsilon a_n\}}e^{-nI_1(x/a_n+p_1^*)} R(x/a_n+p_1^*) \, dx 
         \le -(1 - \varepsilon) \inf \limits_{|z - p_1^*| < \varepsilon}W_1''(z) \frac{\gamma^2}{2}.
	\end{align*}
	Using $\log(a+b)\leq \log2+\max\{\log a,\log b\},\, a,b\geq0$, we conclude
	\begin{align*}
        &\limsup \limits_{n \to \infty}\frac{1}{b_n} \log \int_U e^{-nI_1(x/a_n+p_1^*)} R(x/a_n+p_1^*) \, dx \\
        \le{}& \max \bigg\{\limsup \limits_{n \to \infty}\frac{1}{b_n} \log \int_U 1_{\{|x| < \varepsilon a_n\}}e^{-nI_1(x/a_n+p_1^*)} R(x/a_n+p_1^*) \, dx , \\
        &\hphantom{\max \bigg\{} \limsup \limits_{n \to \infty} \frac{1}{b_n}\log \int_U 1_{\{|x| \geq \varepsilon a_n\}}e^{-nI_1(x/a_n+p_1^*)} R(x/a_n+p_1^*) \, dx\bigg\} 
        + \frac{\log2}{b_n} \\ \le{}& -(1 - \varepsilon) \inf \limits_{|z - p_1^*| < \varepsilon}W_1''(z) \frac{\gamma^2}{2}.
	\end{align*}
	Letting $\varepsilon \to 0$ now yields
	\begin{align}
        \limsup \limits_{n \to \infty} \frac{1}{b_n} \log \int_U e^{-nI_1(x/a_n + p_1^*)} R(x/a_n + p_1^*) \, dx \le -W_1''(p_1^*) \frac{\gamma^2}{2}. \label{mdp_upper}
	\end{align}
	For the lower bound \eqref{LDP_open}, let $O \subset \mathbb{R}$ be an arbitrary nonempty open set. Set again $\gamma := \inf \limits_{x\in O} |x| < \infty$. By the definition of $\gamma$ the set $O \cap \{\lv x\rv<\gamma + \varepsilon\}$ is a nonempty open set. Therefore by Taylor's theorem
	\begin{align*}
        &\int_O e^{-nI_1(x/a_n+p_1^*)} R(x/a_n+p_1^*) \, dx \\
        \ge{}& \int_O 1_{\{|x| < \gamma + \varepsilon\}}e^{-nI_1(x/a_n+p_1^*)} R(x/a_n+p_1^*) \, dx \\
        \ge{}& \inf \limits_{|y-p^*| < (\gamma + \varepsilon)/a_n} R(y) \lambda(O \cap \{\lv x\rv<\gamma + \varepsilon\}) \exp\Big({-}n(\gamma+\varepsilon)^2/(2a_n) \sup \limits_{|y - p_1^*| < (\gamma + \varepsilon)/a_n} W_1''(y)\Big)~,
	\end{align*}
	where $\lambda$ is the Lebesgue measure. This yields
	\begin{align*}
	\liminf \limits_{n \to \infty} \frac{1}{b_n} \log \int_O e^{-nI_1(x/a_n+p_1^*)} R(x/a_n+p_1^*) \, dx \ge -W_1''(p_1^*) \frac{(\gamma + \varepsilon)^2}{2}.
	\end{align*}
	Letting $\varepsilon \to 0$ we therefore get
	\begin{align}
	\liminf \limits_{n \to \infty} \frac{1}{b_n} \log \int_O e^{-nI_1(x/a_n+p_1^*)} R(x/a_n+p_1^*) \, dx \ge -W_1''(p_1^*) \frac{\gamma^2}{2}. \label{mdp_lower}
	\end{align}
	Note that the density of $a_n(p_{2i - 1}^{(n)} - p_1^*)$ is
	\begin{align*}
	\frac{1}{c_n} e^{-nI_1(x/a_n+p_1^*)} R(x/a_n+p_1^*)\, dx,
	\end{align*}
	where $c_n$ is the normalization constant. Plugging $U=O = \mathbb{R}$ into \eqref{mdp_upper} and \eqref{mdp_lower} shows $\lim \limits_{n \to \infty} \frac{1}{b_n} \log c_n = 0$. This proves the large deviations principle for $a_n(p_{2i - 1}^{(n)} - p_1^*)$.
	
	Analogously, $a_n(p_{2i} - p_2^*)$ satisfies a large deviation principle with speed $b_n$ and good rate function $W_2''(p_2^*) x^2/2$. Since the canonical moments are independent, we can conclude that the vector
	\begin{align*}
	a_n \big((p_1^{(n)}, \ldots, p_k^{(n)}) - \vec{y}^*\big)
	\end{align*}
	satisfies a large deviations principle with speed $b_n$ and good rate function $\|Hx\|_2^2/2$, where the matrix $H$ is given by $H = \operatorname{diag}(W_1''(p_1^*), W_2''(p_2^*), W_1''(p_1^*), \ldots)^{1/2} \in \mathbb{R}^{k \times k}$. Recall that $\vec{y}^*=(p_1^*,p_2^*,p_1^*,\dots)\in(0,1)^k$. 
	
	In order to transfer this large deviations principle to the sequence of ordinary moments, we need to apply the delta-method for large  deviations. As 
	Theorem 3.1 in \cite{gaozhao2011} states, the sequence
	\begin{align*}
        a_n\big((m_1^{(n)}, \ldots, m_k^{(n)}) - (m_1^*, \ldots, m_k^*)\big) = a_n\big(\varphi_k^{[a,b]}(p_1^{(n)}, \ldots, p_k^{(n)}) - \varphi_k^{[a,b]}(\vec{y}^*)\big)
	\end{align*}
	satisfies a large deviations principle with good rate function
	\begin{align*}
        I(x) := \inf \{\|Hy\|_2^2/2 \mid (D\varphi_k^{[a,b]}(\vec{y}^*))y = x\} =  \|HD\varphi_k^{[a,b]}(\vec{y}^*)^{-1} x\|_2^2/2.
	\end{align*}
\end{proof}

{\bf Acknowledgements.} The authors would like to thank
M. Stein who typed parts of this manuscript with considerable technical
expertise.
The work of  H. Dette and D. Tomecki was supported by the Deutsche Forschungsgemeinschaft (DFG Research Unit 1735, DE 502/26-2, RTG
2131:  High-dimensional Phenomena in Probability - Fluctuations and Discontinuity). 
The work of M. Venker was supported by  the European Research Council under the European Unions Seventh
Framework Programme (FP/2007/2013)/ ERC Grant Agreement n.  307074 as well as by CRC 701 ``Spectral Structures and Topological 
Methods in Mathematics''.
\bigskip

 \bibliographystyle{apalike}
\bibliography{quellen}

\end{document}